\numberwithin{equation}{section}
\newcommand{\N}{\mathbb N}
\newcommand{\Z}{\mathbb Z}
\newcommand{\R}{\mathbb R}
\def\P{\mathbb P}
\DeclareMathOperator*{\esssup}{ess\,sup}
\DeclareMathOperator{\dist}{dist}
\newcommand{\Prob}{\mathbb{P}}
\numberwithin{equation}{section}
\newtheorem{thm}{Theorem}[section]
\newtheorem{lem}[thm]{Lemma}
\newtheorem{prop}[thm]{Proposition}
\theoremstyle{definition}
\def\smallnegint{\mathop{\int\mkern-13mu
        \raise.5ex\hbox{${\scriptscriptstyle\diagup}$}}\nolimits}
\def\ds{\displaystyle}
\def\ep{\varepsilon}
\def\ssetminus{\,\raise.4ex\hbox{$\scriptstyle\setminus$}\,}
\def \lg{\langle}
\def \rg{\rangle}
\newcommand{\be}{\begin{equation}}
\newcommand{\ee}{\end{equation}}
\newcommand{\B}{\mathcal{B}}
\renewcommand{\d}{d}
\newcommand{\Rd}{\mathbb{R}^\d}
\renewcommand{\bar}{\overline}
\renewcommand{\tilde}{\widetilde}
\renewcommand{\hat}{\widehat}
\DeclareMathOperator*{\osc}{osc}
\DeclareMathOperator{\conv}{conv}
\begin{document}
\title[Periodic approximations of the effective equations]{Periodic approximations of the ergodic constants 
in the stochastic homogenization of nonlinear second-order (degenerate) equations}\author[Pierre Cardaliaguet and Panagiotis E. Souganidis]
{Pierre Cardaliaguet \and Panagiotis E. Souganidis}
\address{Ceremade, Universit\'e Paris-Dauphine,
Place du Maréchal de Lattre de Tassigny, 75775 Paris cedex 16 - France}
\email{cardaliaguet@ceremade.dauphine.fr }
\address{Department of Mathematics, University of Chicago, Chicago, Illinois 60637, USA}
\email{souganidis@math.uchicago.edu}
\thanks{\hskip-0.149in Souganidis was partially supported by the National Science
Foundation Grant DMS-0901802. Cardaliaguet was partially supported by the ANR (Agence Nationale de la Recherche) project  ANR-12-BS01-0008-01.
}
\dedicatory{Version: \today}

\begin{abstract} We prove that the effective nonlinearities (ergodic constants) obtained in the stochastic homogenization of Hamilton-Jacobi, ``viscous'' Hamilton-Jacobi and nonlinear uniformly elliptic pde are approximated by the analogous quantities  of appropriate ``periodizations'' of the equations.
We also obtain an error estimate, when there is a rate of convergence for the stochastic homogenization.
\end{abstract}

\maketitle      


\section{Introduction} 

\noindent In this note we prove that the effective nonlinearities arising in the stochastic homogenization of Hamilton-Jacobi, ``viscous'' Hamilton-Jacobi and nonlinear uniformly elliptic pde can be approximated almost surely by the effective nonlinearities of appropriately chosen ``periodizations'' of the equations. We also establish an error estimate in settings for which a rate of convergence is   known for the stochastic homogenization.
\vskip,05in

\noindent To facilitate the exposition, state (informally) our results in the introduction and put everything in context,  we begin by recalling the basic stochastic homogenization results for the class of equations we are considering here.
The linear uniformly elliptic problem was settled long ago by Papanicolaou and Varadhan \cite{PV1,PV2} and Kozlov \cite{Ko}, while general variational problems were studied by Dal Maso and Modica \cite{DD,DD1} (see also Zhikov, Kozlov, and   Ole\u{\i}nik \cite{ZKO}). Nonlinear problems  were considered only relatively recently. Souganidis \cite{S91} and Rezakhanlou and Tarver \cite{RT} considered  the stochastic homogenization of convex and coercive Hamilton-Jacobi equations. The homogenization of viscous Hamilton-Jacobi equations with convex and coercive nonlinearities was established by Lions and Souganidis \cite{LS1,LS2} and Kosygina, Rezakhanlou, and Varadhan \cite{KRV06}. These equations in spatio-temporal media were studied by Kosygina and Varadhan \cite{KV} and Schwab \cite{Sch}.  A new proof for the homogenization yielding convergence in probability was found by Lions and Souganidis \cite{LS3}, and the argument was extended to almost sure by Armstrong and Souganidis  \cite{AS} who also  considered unbounded environments satisfying general mixing assumptions. Later Armstrong and Souganidis \cite{ASo3} put forward a new argument based on the so-called metric problem.
The convergence rate for these problems was considered, first in the framework of Hamilton-Jacobi equations by Armstrong, Cardaliaguet and Souganidis \cite{ACS} and later extended to the viscous Hamilton-Jacobi case by Armstrong and Cardaliaguet \cite{AC}, while Matic and Nolen \cite{MN} obtained variance estimates for first order problems.  All the above results assume that the Hamiltonians are convex and coercive.  The only known result for stochastic homogenization of non coercive Hamilton-Jacobi equations was obtained by Cardaliaguet and Souganidis \cite{CaSo} for the so-called $G$-equation (also see Nolen and Novikov~\cite{NolenNovikov} who considered the same in dimension $d=2$ and under additional  structure conditions). The stochastic homogenization of fully nonlinear uniformly elliptic second-order equations was established by Caffarelli, Souganidis and Wang \cite{CSW}  and   Caffarelli and Souganidis \cite{CS}  obtained a rate of convergence in strongly mixing environments. Armstrong and Smart extended in \cite{AS12} the homogenization result of \cite{CSW} to the non uniformly elliptic setting and, recently,  improved the convergence rate in \cite{AS13}. The results of \cite{CSW, CS} were extended to spatio-temporal setting by Lin \cite{Lin}.
\vskip.05in

\noindent The problem considered in this paper---approximation of the homogenized effective quantities by the effective quantities for periodic problems---is a classical one.  Approximation by periodic problems as a way to prove random homogenization was used first in \cite{PV2} for linear uniformly elliptic problems and later in the context of random walks in random environments by, for example, Lawler \cite{L}  and Guo and Zeitouni \cite{GZ}. The approach of \cite{PV2} can be seen as a particular case of the ``principle of periodic localization" of Zhikov, Kozlov, and   Ole\u{\i}nik \cite{ZKO}  for linear, elliptic problems. Bourgeat and Piatnitski \cite{BP} gave the first convergence estimates for this approximation, while Owhadi \cite{Ow} proved the convergence for the effective conductivity. As far as we know the results in this paper are the first for nonlinear problems and provide a complete answer to this very natural question.
\vskip.05in

\noindent  We continue with an informal discussion of our results. 
Since the statements and arguments for the Hamilton-Jacobi and viscous Hamilton-Jacobi equations are similar here as well as for the rest of the paper we combine them under the heading viscous Hamilton-Jacobi-Bellman equations, for short viscous HJB. Hence our presentation consists of two parts, one for viscous HJB and one for fully nonlinear second-order uniformly elliptic problems. We begin with the former and continue with the latter.
\vskip.05in

\noindent {\it Viscous Hamilton-Jacobi-Bellman equations:} We consider viscous HJB equations of the form
\begin{equation} \label{HJqIntro}
-\ep {\rm tr}\left(A(\tfrac x\ep,\omega)D^2u^\ep\right) + H(Du^\ep,\tfrac x\ep, \omega) = 0, 
\end{equation}
with the  possibly degenerate elliptic matrix $A=A(y,\omega)$ and the Hamiltonian $H=H(p,y,\omega)$  stationary ergodic with respect to $\omega$  and, moreover, $H$  convex and coercive in $p$ and $A$ the ``square'' of  a Lipschitz matrix.
Precise assumptions are given in Section  \ref{sec:CVV}.
Under these conditions, Lions and Souganidis \cite{LS2} proved that almost sure homogenization holds. This means that there exists a convex and coercive Hamiltonian $\overline H$, which we call the ergodic constant,  such that the solution $u^\ep=u^\ep(x,\omega)$ of~\eqref{HJqIntro}, subject to appropriate initial and boundary conditions, converge, as $\ep \to 0$, locally uniformly and  almost surely to the solution $\overline u$ of the deterministic equation, with the same initial and boundary conditions, $\overline H(D \overline u)=0.$
\vskip.05in

\noindent A very useful way to identify the effective Hamiltonian $\overline H(p)$ is to consider the auxiliary problem
\begin{equation}\label{approxcorrector} 
\delta v^\delta - {\rm tr}(A(x,\omega)D^2 v^\delta) + H(D v^\delta +p ,x, \omega) = 0 \quad \mbox{in} \ \R^d, 
\end{equation}
which admits a unique stationary solution $v^\delta(\cdot,\omega)$, often refer to as an ``approximate corrector''. It was shown in \cite{LS1} that, for each $p \in \R^d$, $c>0$ and almost surely in $\omega$, as $\delta \to 0$,
\begin{equation}\label{approximate1}
\sup_{y \in B_{c/\delta}} |\delta v^\delta (\cdot, \omega) + \overline H(p)|  \to 0.  
\end{equation}

\noindent If $A$ and $H$ in \eqref{HJqIntro}  are replaced by $L-$periodic in $y$ maps $A_L(\cdot , \omega)$ and $H_L(\cdot,\cdot, \omega)$, the effective Hamiltonian $\overline H_L(\cdot,\omega)$ is, for any $(p, \omega)\in\R^d\times \Omega$, the unique constant $\overline H_L(p,\omega)$ for which the problem 
\be\label{ergoeqIntro}
- {\rm tr}\left(A_L(x,\omega)D^2\chi \right) + H_L(D\chi+p ,x, \omega) = \overline H_L(p,\omega) \quad \mbox{in} \ \R^d, 
\ee
has a continuous,  $L-$periodic solution $\chi$.  In the context of periodic homogenization,  \eqref{ergoeqIntro} and $\chi$ are called respectively the corrector equation and corrector.
Without any periodicity,   for the constant in \eqref{ergoeqIntro} to be unique, it is necessary for $\chi$ to be strictly sublinear at infinity. As it was shown by Lions and Souganidis \cite{LS1}, in general it is not possible  to find such solutions. The nonexistence of correctors  is the main difference between the periodic and the stationary ergodic settings, a fact which leads to several technical difficulties as well as new qualitative behaviors. 
\vskip.05in

\noindent The very natural question is whether it is possible to come up with $A_L$ and $H_L$ such that, as $L \to \infty$, $\overline H_L(\cdot,\omega)$ converges locally uniformly in $p$ and almost surely in $\omega$  to $\overline H$.  For this it is necessary to choose $A_L$ and $H_L$ carefully.
The intuitive idea, and this was done in the linear uniformly elliptic setting  \cite{BP, Ow, ZKO},   is to take $A_L=A$ and $H_L=H$ in $[-L/2, L/2)^d$ and then to extend them periodically in $\R^d$. 
Unfortunately, such natural as well as simple choice cannot work for viscous HJB equations for two reasons. The first one is that  \eqref{ergoeqIntro}, with the appropriate boundary/initial conditions,  does not have a solution unless $A_L$ and $H_L$ are at least continuous, a property that is not satisfied by the simple choice described above.
The second one, which  is more subtle,  is intrinsically related to the convexity and the coercivity of the Hamiltonian.  Indeed it turns out that viscous HJB equations are very sensitive to large values of the Hamiltonian. As a consequence, the $H_L$'s  must be substantially smaller than $H$ at places where $H$ and $H_L$ differ. 
\vskip.05in

\noindent To illustrate the need to come up with suitable periodizations, we discuss the elementary case when $A\equiv 0$ and $H(p,x,\omega)= |p|^2-V(x,\omega)$ with  $V$ stationary,  bounded and uniformly continuous.  This is one of the very few examples for which the homogenized Hamiltonian is explicitly known for some values of $p$. Indeed $\overline H(0)=\inf_{x\in \R^d}V(x,\omega)$, a quantity  which is independent on $\omega$ in view of the stationarity of $V$ and the assumed ergodicity. If $H_L(p,x,\omega)= |p|^2-V_L(x,\omega)$, with $V_L$ is $L-$periodic, then $\overline H_L(0,\omega)= \inf_{x\in \R^d} V_L(x,\omega)$.  It follows that $V_L$ cannot just be any regularized truncation of $V(\cdot, \omega)$, since it must satisfy, in addition, the  condition $\inf_{x\in \R^d} V_L(x,\omega)\to  \inf_{x\in \R^d}V(x,\omega)$ as $L\to\infty$. 
\vskip.05in

\noindent Here we show that it is possible to choose periodic $A_L$ and $H_L$ so that the ergodic constant  $\overline H_L(p, \omega)$ converges, as $L\to+\infty$ to $\overline H(p)$  locally uniformly in $p$ and almost surely in $\omega$. One direction of the convergence is based on the homogenization, while the other relies on the construction of subcorrectors (i.e., subsolutions)  to \eqref{ergoeqIntro} using  approximate correctors for the original system. We also provide an error estimate for the convergence provided a  rate is known for the homogenization, which is the case for ``i.i.d. environments" \cite{ACS, AC}. 
\vskip.05in
\vskip.05in

\noindent {\it Fully nonlinear, uniformly elliptic equations:} We consider fully nonlinear uniformly elliptic equations of the form 
\be\label{eq:unifellIntro}
F\left(D^2u^\ep, \frac{x}{\ep},\omega\right)= 0, 
\ee
Following Caffarelli, Souganidis  and Wang \cite{CSW}, it turns out that there exists a uniformly elliptic $\overline F$, which we call again the ergodic constant of the homogenization,  such that the solution of  \eqref{eq:unifellIntro}---complemented with suitable boundary conditions---converges, as $\ep\to 0$, to the solution of 
$
\overline F\left(D^2u\right)= 0.$
\vskip.05in

\noindent Although technically involved,  this setting is closer to the linear elliptic one.  Indeed we show that the effective equation $\overline F_L$ of any (suitably regularized) uniformly elliptic (with constants independent of $L$) periodization of $F$ converges almost surely  to $\overline F$.
The proof relies on a combination of homogenization and the Alexandroff-Bakelman-Pucci (ABP)  estimate.  We also give an error estimate for the difference 
$|\overline  F_L(P,\omega) -\overline  F(P)|$, again provided we know rates for the stochastic homogenization like the one's established 
 in \cite{CS} and  \cite{AS13}. 

\subsection*{Organization of the paper}  In the remainder of the introduction we introduce the  notations and some of the terminology needed for the rest of the paper, discuss the general random setting and  record the properties of an auxiliary cut-off function we will be using to ensure the regularity of the approximations. The next two sections are devoted to the viscous HJB equations. In section 2 we introduce the basic assumptions and state and prove the approximation result. Section 3 is about the rate of convergence. The last two sections are about the elliptic problem. In section 4 we discuss the assumptions and state and prove the approximation result. The rate of convergence is the topic of the last section of the paper.


\subsection*{Notation and conventions}
The symbols $C$ and $c$ denote positive constants which may vary from line to line and, unless otherwise indicated, depend only on the assumptions for $A$, $H$ and other appropriate parameters. 
We denote the $\d$-dimensional Euclidean space by $\Rd$,  
$\N$ is the set of natural numbers, 
${\mathcal S}^d$ is the space of $d\times d$ real valued symmetric matrices, and $I_d$ is the $d\times d$ identity matrix . For each $y =(y_1,\dots,y_d)\in \Rd$, $|y|$ denotes the Euclidean length of $y$, $|y|_\infty=\max_i|y_i|$ its $l^\infty-$length,  $\|X\|$ is the usual $L^2$-norm of $X \in {\mathcal S}^d$ and $\lg\cdot,\cdot\rg$ is the standard inner product in $\R^d$. If $E \subseteq\Rd$, then $|E|$ is the Lebesgue measure of $E$ and  $\rm Int (E)$,   $\overline E$ and  $\conv E$ are respectively the interior, the closure and the closure of the convex hull of $E$. We abbreviate almost everywhere to a.e.. We use $\sharp({\mathcal K})$ for the number of points of a finite set ${\mathcal K}$.  For $r>0$, we set $B(y,r): = \{ x\in \Rd : |x-y| < r\}$ and $B_r : = B(0,r)$. For each $z \in \R^d$ and $R>0$, $Q_R(z):=z+[-R/2,R/2)^d$ in $\R^d$ and $Q_R := Q_R(0)$. We say that a map is $1-$periodic, if it is periodic in $Q_1$. 
The distance between two subsets $U,V\subseteq \Rd$ is $\dist(U,V) = \inf\{ |x-y|: x\in U, \, y\in V\}$. If $f:E \to \R$ then $\osc_E f : = \sup_E f - \inf_E f$. 
The sets of functions on a set~$U\subseteq\Rd$  which are  Lipschitz,  have Lipschitz continuous derivatives and are smooth functions 
are written respectively as  
~$C^{0,1}(U)$,  
~$C^{1,1}(U)$ and ~$C^{\infty}(U)$. 
The set of $\alpha-$Hölder continuous functions on $\R^d$ is $C^{0,\alpha}$ and ~$\|u\|$ and $[u_L]_{0,\alpha}$ denote respectively the 
usual $\sup$-norm and $\alpha$-Hölder seminorm. When we need to denote  the dependence of these last quantities on a particular domain $U$ we write 
$\|u\|_U$ and $[u_L]_{0,\alpha;U}$.
The Borel ~$\sigma$-field 
on a metric space $M$ is ~$\mathcal{B}(M)$. If $M=\R^d$, then~$\mathcal{B}=\mathcal{B}(\R^d)$. 
Given  a probability space $(\Omega,\mathcal F, \Prob)$, 
we write a.s. or $\Prob$-a.s.  to abbreviate almost surely.

\noindent Throughout the paper, all differential inequalities are taken to hold in the viscosity sense. Readers not familiar with the fundamentals of the theory of viscosity solutions may consult standard references such as~\cite{B, CIL}.

\subsection*{The general probability setting} Let $(\Omega, \mathcal F, \mathds P)$ be a probability space endowed with a group $(\tau_y)_{y\in \Rd}$ of $\mathcal F$-measurable, measure-preserving transformations $\tau_y:\Omega\to \Omega$. That is, we assume that, for every $x,y\in\Rd$ and $A\in \mathcal F$,
\begin{equation}\label{pres}
\Prob[\tau_y(A)] = \Prob[A] \quad \mbox{and} \quad \tau_{x+y} = \tau_{x} \circ \tau_y.
\end{equation}
Unless we discuss error estimates we take  the group $(\tau_y)_{y\in \Rd}$  to be ergodic. That is, we assume that, if for $A\in \mathcal F$,
\begin{equation}\label{ergodic}
\text{ $\tau_y(A)=A$ for all $y\in \R^d$, then either $\Prob[A]=0$ or $\Prob[A]=1$.} 
\end{equation}
A map $f:M \times \R^d \times \Omega \to \R$, with $M$ either $\R^d$ or ${\mathcal S}^d$, which is measurable with respect to $\mathcal B(M) \otimes \mathcal B \otimes \mathcal F$ is called stationary if  for every 
$m\in M, y,z\in\Rd$ and $\omega \in \Omega$,
$$
f(m,y,\tau_z \omega) = f(m,y+z,\omega).
$$
Given a random variable $f:M \times \R^d \times \Omega \to \R$,  for each $E\in \B$,  let $\mathcal G(E)$ be the $\sigma$-field on $\Omega$ generated by the  $f(m,x,\cdot)$ for $x\in E$ and $m\in M$. We say that the environment is  ``i.i.d.'', if there exists $D > 0$ such that, for all $V,W\in \mathcal B$,
\begin{equation}\label{iid}
{\rm if }\; \dist(V,W) \geq D  \ \ \mbox{then} \ \  \mathcal{G}(V) \ \mbox{and} \ \mathcal{G}(W) \ \mbox{are independent.}
\end{equation}
We say that the environment is strongy mixing with rate $\phi: [0,\infty) \to [0,\infty)$, if  $\lim_{r \to \infty} \phi(r) =0$ and 
\begin{equation}\label{mixing}
\text{if $\dist(V,W) \geq r$ \ then} \ \sup_{A \in \mathcal G(V), B \in \mathcal G(W)} | \Prob[A\cap B] -\Prob[A]\Prob[B] | \leq \phi(r). 
\end{equation}
\subsection*{An auxiliary function} To avoid repetition we summarize here the properties of an auxiliary cut-off function we use in all  sections to construct the periodic approximations. We fix $\eta\in (0,1/4)$ and choose a $1-$ periodic smooth  $\zeta: \R^d \to [0,1]$ so that 
\begin{equation}\label{zeta}
\zeta = 0 \ \text{ in} \ Q_{1-2\eta}, \  \zeta=1 \ \text{in } \ Q_1 \backslash Q_{1-\eta},  \ \|D\zeta\| \leq c/\eta \ \text{ and } \ \|D^2\zeta\| \leq c/\eta^2.
\end{equation} 
To simplify the notation we omit the dependence of $\zeta$ on $\eta$.

\section{Approximations for viscous HJB equations}\label{sec:CVV}

\noindent We introduce the hypotheses and we state and prove the approximation result. 

\subsection*{The hypotheses}

The Hamiltonian $H:\Rd\times\Rd\times \Omega\to \R$ is assumed to be measurable with respect to $\mathcal B \otimes \mathcal B \otimes \mathcal F$.  We write $H=H(p,y,\omega)$ and we require that, for every 
$p,y,z\in\Rd$ and $\omega \in \Omega$,
\begin{equation} \label{stnary}
H(p,y,\tau_z \omega) = H(p,y+z,\omega).
\end{equation}
We continue with the structural hypotheses on $H$. We assume that $H$ is, uniformly in $(y,\omega)$, coercive in $p$, that is    
there exists constants $\Cl[C]{C-Const0}>0$ and $\gamma>1$ such that 
\be\label{coercivite}
{\Cr{C-Const0}}^{-1}|p|^\gamma-\Cr{C-Const0} \leq H(p, x, \omega)\leq \Cr{C-Const0} |p|^\gamma+\Cr{C-Const0},
\ee
and, for all $(y,\omega)$,
\be\label{convex}
 \text{ the map $p \to H(p,y,\omega)$ is convex}.
 \ee
The last assumption can be relaxed to level-set convexity at the expense of some technicalities but we are not pursuing this here.
\vskip.05in

\noindent The required regularity of $H$ is that, for all $x,y,p,q\in\R^d$ and  $\omega\in \Omega$,  
\be\label{Hx-Hy}
|H(p,x,\omega)-H(p,y,\omega)|\leq \Cr{C-Const0} (|p|^\gamma+1) |x-y|
\ee
and 
\be\label{Hp-Hq}
|H(p,x,\omega)-H(q,x,\omega)|\leq \Cr{C-Const0}(|p|^{\gamma-1}+|q|^{\gamma-1}+1)|p-q|.
\ee

\noindent Next we discuss the hypotheses on $A:\R^d\times \Omega \to {\mathcal S}^d$. 
We assume that, for each $(y,\omega) \in \R^d \times \Omega$, there exists a $d \times k$ matrix $\Sigma=\Sigma(y,\omega)$ such that 
\begin{equation}\label{a1}
A(y,\omega)= \Sigma(y,\omega)\Sigma^T(y,\omega). 
\end{equation}
The matrix $\Sigma$ is supposed to be  measurable with respect to $ \mathcal B \otimes \mathcal F$ and stationary, that is for any $y,z\in \R^d$ and $\omega\in \Omega$, 
\begin{equation}\label{a2}
\Sigma(y,\tau_z\omega)= \Sigma(y+z,\omega). 
\end{equation}
It is clear that \eqref{a1} and \eqref{a2} yield that $A$ is degenerate elliptic and stationary.
\vskip.05in
 
\noindent We also assume that $\Sigma$ is Lipschitz continuous with respect to the space variable, i.e., for all $x,y\in \R^d$ and  $\omega\in \Omega$, 
\be\label{Sx-Sy}
|\Sigma(x,\omega)-\Sigma(y,\omega)|\leq  \Cr{C-Const0}|x-y|.
\ee
To simplify statements, we write
\be\label{allH}
\eqref{stnary}, \eqref{coercivite}, \eqref{convex}, \eqref{Hx-Hy} \  \text{and} \  \eqref{Hp-Hq} \ \text{ hold},
\ee
and
\be\label{allA}
\eqref{a1}, \eqref{a2}  \ \text{and} \   \eqref{Sx-Sy}  \  \text{hold}.
\ee

\noindent We denote by $\overline H=\overline H(p)$ the averaged Hamiltonian corresponding to the homogenization problem for $H$ and $A$. We recall from the discussion in the introduction that $\overline H(p)$ is the a.s. limit, as $\delta\to 0$, of $-\delta v^\delta(0,\omega)$, where $v^\delta$ is the solution to \eqref{approxcorrector}.  
Note  (see \cite {LS2}) that, in view of \eqref{convex}, $\overline H$ is convex.  Moreover  \eqref{coercivite} yields (again see \cite{LS2})
\be\label{estBarH}
\Cr{C-Const0}^{-1}|p|^\gamma-\Cr{C-Const0} \leq \overline H(p)\leq \Cr{C-Const0}|p|^\gamma+\Cr{C-Const0}\;.
\ee


\subsection*{The periodic approximation}  Let $\zeta$ be smooth cut-off function satisfying \eqref{zeta}.
\vskip.05in

\noindent For $(p,x,\omega)\in \R^d\times Q_L\times \Omega$ we set
$$
A_L(x,\omega)= (1-\zeta(\frac{x}{L}))A(x,\omega)
$$
and
$$
H_L(p,x,\omega)= (1-\zeta(\frac{x}{L}))H(p,x,\omega)+\zeta(\frac{x}{L})H_0(p),
$$
where,  for a constant $\Cr{defH0}>0$ to be defined below,
$$
H_0(p)= {\Cl[C]{defH0}}^{-1}|p|^\gamma-\Cr{defH0},
$$
Then we extend $A_L$ and $H_L$ to $\R^d\times \R^d\times \Omega$ by periodicity, i.e., for all
 $(x,p)\in \R^d$, $\omega \in \Omega$  and $\xi\in \Z^d$,
$$
H_L(p, x+L\xi,\omega)=H_L(p,x,\omega) \; {\rm and }\; A_L(x+L\xi,\omega)= A_L(x,\omega).
$$
To define $\Cr{defH0}$, let us recall (see \cite{LS2}) that \eqref{allH}  yields  a constant $\Cl[C]{defH01}\geq 1$ such that, for any $\omega\in \Omega$, $p\in \R^d$ and $\delta\in (0,1)$,  the solution $v^\delta$ of \eqref{approxcorrector} satisfies $\|Dv^\delta+p\|_\infty\leq \Cr{defH01}(|p|+1)$. Then we choose $\Cr{defH0}$ so large that, for all $p\in \R^d$,  
$$
\Cr{defH0}^{-1}(\Cr{defH01}(|p|+1))^\gamma-\Cr{defH0}
\leq 
{\Cr{C-Const0}}^{-1}|p|^\gamma-\Cr{C-Const0}. 
$$
In view of \eqref{coercivite}, \eqref{estBarH} and the previous discussion on $v^\delta$, we have
\be\label{pptH0}
H_0(Dv^\delta+p)\leq \overline H(p)\qquad {\rm and} \qquad H_0(p)\leq \overline H(p).
\ee
and, in addition, uniformly in $(y,\omega)$, 
\be\label{ppttH0}
H_L(p,y,\omega) \  \text{is coercive in \ $p$ \ with a constant that depends only on \ $\Cr{C-Const0}$.}
\ee
\vskip.05in

%
%
%
\subsection*{The approximation result} Let $\overline H_L=\overline H_L(p, \omega)$ be the averaged Hamiltonian corresponding to the homogenization problem for $H_L$ and $A_L$. We claim that,  as $L\to +\infty$, $\overline H_L$ is a good a.s. approximation of $\overline H$.
\vskip.05in

\begin{thm}\label{th:mainCV} Assume \eqref{ergodic}, \eqref{allH} and \eqref{allA}. There exists a constant  $C>0$ such that,  for all $p$ and a.s.,
$$
\limsup_{L\to+\infty} \overline H_L(p,\omega) \leq \overline H(p) \leq \liminf_{L\to +\infty} \overline H_{L}(p,\omega) + C(|p|^\gamma+1)\eta.
$$ 
\end{thm}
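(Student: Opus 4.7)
\emph{Structure of the proof.} The plan is to prove the two inequalities separately. The upper bound uses the random approximate corrector $v^\delta$ of \eqref{approxcorrector} to build an $L$-periodic subcorrector for the periodized cell problem. The lower bound runs in the opposite direction: the $L$-periodic corrector $\chi_L$ of the periodized problem is modified in the boundary layers to yield a bounded subsolution of the original equation on $\R^d$, which is then compared with $v^\delta$ through the approximate-corrector characterization of $\overline H(p)$.

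\emph{Upper bound} $\limsup_L \overline H_L(p,\omega)\le \overline H(p)$. Choose $\delta=\delta(L)$ with $\delta(L)L\to0$; then by \eqref{approximate1}, a.s.\ in $\omega$, $-\delta v^\delta(\cdot,\omega)\le \overline H(p)+o_L(1)$ uniformly on $Q_L$. A direct computation using \eqref{approxcorrector} and the definitions of $A_L$ and $H_L$ gives on $Q_L$
\[
-\tr(A_L D^2 v^\delta)+H_L(Dv^\delta+p,x,\omega)=-(1-\zeta(x/L))\,\delta v^\delta+\zeta(x/L)\,H_0(Dv^\delta+p),
\]
which by the first bound in \eqref{pptH0} is $\le \overline H(p)+o_L(1)$ on $Q_L$. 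Thus $v^\delta$ is a subsolution of the periodized cell equation on $Q_L$, but is not $L$-periodic. To periodize, glue $v^\delta$ in $Q_{L(1-2\eta)}$ with a suitable constant in the outer strip $Q_L\setminus Q_{L(1-\eta)}$, where $\zeta=1$ so that $A_L=0$ and $H_L=H_0$: by the second bound in \eqref{pptH0} constants are themselves subsolutions there. Concretely, with $M_L:=\sup_{Q_L\setminus Q_{L(1-\eta)}}v^\delta(\cdot,\omega)$ and $\chi_L:=\max(v^\delta-M_L,0)$, the function $\chi_L$ vanishes on the outer strip so its $L$-periodic extension is continuous on $\R^d$; closure of viscosity subsolutions of convex equations under taking maxima yields that $\chi_L$ is an $L$-periodic continuous subsolution of the cell equation with right-hand side $\overline H(p)+o_L(1)$, whence $\overline H_L(p,\omega)\le \overline H(p)+o_L(1)$ and the $\limsup$ bound follows.

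\emph{Lower bound} $\overline H(p)\le \liminf_L \overline H_L(p,\omega)+C\eta(|p|^\gamma+1)$. Let $\chi_L$ be the $L$-periodic continuous corrector for the periodized problem, characterized by $-\tr(A_L D^2\chi_L)+H_L(D\chi_L+p,x,\omega)=\overline H_L(p,\omega)$ on $\R^d$ and, by the uniform coercivity \eqref{ppttH0}, satisfying $\|D\chi_L\|_\infty\le C(|p|+1)$. On the set $G_L:=\{\zeta(\cdot/L)=0\}=\bigcup_{k\in\Z^d}(Q_{L(1-2\eta)}+kL)$, $A_L=A$ and $H_L=H$, so $\chi_L$ already solves the original equation with right-hand side $\overline H_L(p,\omega)$ there; only the thin $L$-periodic boundary strips $\R^d\setminus G_L$ need to be fixed. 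Subtracting a correction $K\phi$, with $\phi$ $L$-periodic, supported in the strips, reaching height of order $\eta L$ and having derivatives of the expected orders, and with $K=O(|p|^\gamma+1)$, produces an $L$-periodic $\tilde\chi_L$ that, using the Lipschitz bound on $\chi_L$, the inequality $H_0\le H$, and \eqref{Hp-Hq}, is a subsolution of the original equation on $\R^d$ with right-hand side $\overline H_L(p,\omega)+C\eta(|p|^\gamma+1)$; the factor $\eta$ reflects the relative thickness of the strips. Since $\tilde\chi_L$ is bounded (continuous and $L$-periodic), adding $\delta\tilde\chi_L$ to both sides makes it a subsolution of the $\delta$-discounted equation satisfied by $v^\delta$, up to an error $\delta\|\tilde\chi_L\|_\infty$; comparison at the origin, multiplication by $-\delta$, and passage to the limit $\delta\to0$ via \eqref{approximate1} deliver $\overline H(p)\le \overline H_L(p,\omega)+C\eta(|p|^\gamma+1)$, and the $\liminf$ bound follows.

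\emph{Main obstacle.} The delicate step is the construction of $\tilde\chi_L$ in the lower bound: the correction $K\phi$ must simultaneously absorb the reinstatement of $H$ in place of $H_L$ and of the Hessian term $-\tr(AD^2\chi_L)$ where $A_L$ vanishes, entirely in the viscosity sense (since $\chi_L$ has no a priori $C^2$ regularity), with a single geometric factor of $\eta$ controlling the total cost. The upper bound has its own smaller subtlety, namely verifying that $\max(v^\delta-M_L,0)$ retains the subsolution property through the region where the two candidates meet.
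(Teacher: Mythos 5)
Both halves of your proposal diverge from the paper's argument, and both have gaps---the first one is repairable, the second is structural.

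\smallskip
\noindent\emph{Left inequality} $\limsup_L \overline H_L(p,\omega)\le \overline H(p)$. Your idea of manufacturing an $L$-periodic subsolution of the periodized cell equation from the approximate corrector $v^\delta$ is the same idea as in the paper (Lemma~\ref{Lem:owerBound}), but the hard gluing via $\chi_L=\max(v^\delta-M_L,0)$ fails. The point you flag as a ``smaller subtlety'' is in fact the obstruction: at any $x\in Q_{L(1-\eta)}$ (in particular at points of $Q_{L(1-2\eta)}$ where $\zeta(\cdot/L)=0$ and $H_L=H$) with $v^\delta(x)<M_L$, your $\chi_L$ equals the constant $0$, and $0$ is a subsolution there only if $H(p,x,\omega)\le\overline H(p)+o_L(1)$---which is not guaranteed. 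The bound $|\delta v^\delta+\overline H(p)|\le\ep$ on $Q_L$ gives $\osc_{Q_L}v^\delta\le 2\ep/\delta$ and $v^\delta(x)-M_L\ge -2\ep/\delta$, so the set $\{v^\delta\le M_L\}$ can penetrate arbitrarily deep into $Q_L$; no choice of $M_L$ confines the gluing region to the strip $\{\zeta=1\}$. The paper replaces the max by the smooth interpolation $\Psi_L=\xi(\cdot/L)\,v^\delta-(1-\xi(\cdot/L))\,\overline H(p)/\delta$ with $\xi$ transitioning from $1$ to $0$ inside $Q_L\setminus Q_{L(1-\eta)}$: then the interpolation cost is incurred only where $\zeta\equiv1$, where $A_L=0$ and $H_L=H_0$, and convexity of $H_0$ together with the sup bound $|v^\delta+\overline H(p)/\delta|\le\ep/\delta$ controls the price by $C\ep/\eta$. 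You should adopt this; the max-of-subsolutions route cannot be patched.

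\smallskip
\noindent\emph{Right inequality} $\overline H(p)\le \liminf_L \overline H_L(p,\omega)+C(|p|^\gamma+1)\eta$. Here you yourself identify the obstacle, and it is not a detail---your plan cannot be carried through. On $\{\zeta>0\}$ the periodic corrector $\chi_L$ solves an equation in which the diffusion $A_L=(1-\zeta)A$ is damped (and on the outer strip $\{\zeta=1\}$ completely removed), so the viscosity information carries \emph{no} bound on $-\tr(A\,D^2\chi_L)$: on $\{\zeta=1\}$ the equation is purely first-order and second-order jets of $\chi_L$ are unconstrained, while for $\zeta<1$ the only information is $(1-\zeta)^{-1}(\overline H_L-\zeta H_0(D\chi_L+p))$, which blows up as $\zeta\to1$. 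Subtracting a bump $K\phi$ with $\|D^2\phi\|\sim 1/(\eta L)$ only perturbs the diffusion term by $K\tr(A\,D^2\phi)=O(K/(\eta L))$ and does nothing to absorb the uncontrolled $-\tr(A\,D^2\chi_L)$. The paper proves this direction by a completely different mechanism (Lemma~\ref{lem:upperbound}) that never touches second derivatives: rescale the correctors, $\Phi^p_L(x):=L^{-1}\chi^p_L(Lx,\omega)$; by the uniform Lipschitz and sup bounds extract a uniform subsequential limit $\Phi^p$; invoke the stability of stochastic homogenization to get $\overline H(D\Phi^p+p)=\liminf_L\overline H_L(p,\omega)$ a.e.\ in $\mathrm{Int}(Q_{1-2\eta})$; then, since $\int_{Q_1}D\Phi^p=0$ by periodicity and $\overline H$ is convex, Jensen's inequality yields $\overline H(p)\le\int_{Q_1}\overline H(D\Phi^p+p)$, and the error $C(|p|^\gamma+1)\eta$ comes only from estimating the contribution of $Q_1\setminus Q_{1-2\eta}$ via \eqref{estBarH}. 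It is the compactness/homogenization step that sidesteps the Hessian control issue, and this is why the paper succeeds where your subcorrector construction stalls.
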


\noindent Note that, in view of the dependence of $\zeta$ on $\eta$,  $\overline H_L(p,\omega)$ also depends on $\eta$ and, hence, it is not possible to let $\eta \to 0$ in the above statement. However, it is  a simple application of the discussion of Section \ref{sec:rate}, that under suitable assumptions on the random media, it is possible to choose $\eta=\eta_n$ and $L_n\to+\infty$ in such a way that 
$$
\lim_{n\to+\infty} \overline H_{L_n}(p)= \overline H(p).
$$

\subsection*{The proof of Theorem \ref{th:mainCV}}
The proof is divided into two parts which are stated as two separate lemmata. The first is the upper bound, which relies on homogenization and only uses the fact that $H=H_L$  in $Q_{L(1-2\eta)}$. The  second is the lower bound. Here the specific construction of $H_L$ plays a key role. 
\vskip.05in 
\begin{lem}\label{lem:upperbound} ({\it The upper bound}) Assume \eqref{ergodic}, \eqref{allH} and \eqref{allA}. There exists $C>0$ that depends only on $\Cr{C-Const0}$ such that, for all $p\in \R^d$ and a.s.,  
$$
\ds \overline H(p) \leq \liminf_{L\to +\infty} \overline H_L(p,\omega) + C(|p|^{\gamma}+1) \eta.$$
\end{lem}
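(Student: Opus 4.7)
The strategy is to use the periodic corrector $\chi_L(\cdot,\omega)$ for the periodized problem $(A_L,H_L)$, rescale it by $\ep=1/L$ onto a fixed unit-scale domain, and then invoke the almost sure homogenization of the original equation. The key observation is that the cut-off $\zeta(y/L)$ vanishes on $Q_{L(1-2\eta)}$, so on this inner box $A_L=A$, $H_L=H$, and $\chi_L$ satisfies
\[
-\tr(A(x,\omega)D^2\chi_L)+H(D\chi_L+p,x,\omega)=\overline H_L(p,\omega) \quad \text{in } Q_{L(1-2\eta)}.
\]
By the uniform coercivity of $H_L$ from \eqref{ppttH0} and the a priori bound $|\overline H_L(p,\omega)|\le C(|p|^\gamma+1)$ (inherited from $H_L\le\max(H,H_0)$ via \eqref{coercivite}), standard HJB theory provides a continuous $L$-periodic viscosity corrector with uniform Lipschitz bound $\|D\chi_L\|_\infty\le C(|p|+1)$; after normalizing so that $\chi_L(0,\omega)=0$ we get $\|\chi_L\|_{L^\infty(Q_L)}\le C(|p|+1)L$.

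\textbf{Rescaling and homogenization.} Fix $\omega$ in the full-measure set where the Lions-Souganidis homogenization of \eqref{HJqIntro} holds, and pick $L_n\to+\infty$ realizing $c_*:=\liminf_L\overline H_L(p,\omega)$. Set $\ep_n=1/L_n$ and $\psi_n(y):=\ep_n\chi_{L_n}(y/\ep_n,\omega)$, which is uniformly bounded and Lipschitz on $Q_{1-2\eta}$ by $C(|p|+1)$ and solves
\[
-\ep_n\tr\!\big(A(y/\ep_n,\omega)D^2\psi_n\big)+H(D\psi_n+p,y/\ep_n,\omega)=\overline H_{L_n}(p,\omega) \quad \text{in } Q_{1-2\eta}.
\]
Arzel\`a-Ascoli and weak-$*$ compactness give, along a subsequence, $\psi_n\to\psi^0$ uniformly and $D\psi_n\rightharpoonup D\psi^0$ in $L^\infty$-weak-$*$. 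Pathwise stability of viscosity solutions under stochastic homogenization (perturbed test functions / half-relaxed limits) then identifies $\psi^0$ as a viscosity solution of $\overline H(D\psi^0+p)=c_*$ in $Q_{1-2\eta}$; since $\overline H$ is convex and coercive, $\psi^0$ is locally Lipschitz and the equation holds almost everywhere.

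\textbf{Periodicity and Jensen.} The $O(\eta)$ gain comes from exploiting the $L_n$-periodicity of $\chi_{L_n}$: the divergence theorem gives $\int_{Q_{L_n}}D\chi_{L_n}=0$, whence
\[
\Big|\int_{Q_{L_n(1-2\eta)}}D\chi_{L_n}\Big|=\Big|\int_{Q_{L_n}\setminus Q_{L_n(1-2\eta)}}D\chi_{L_n}\Big|\le C(|p|+1)\eta L_n^d.
\]
Changing variables yields $|\int_{Q_{1-2\eta}}D\psi_n|\le C(|p|+1)\eta$, and passing to the weak limit gives $|\overline\xi|\le C(|p|+1)\eta$ for $\overline\xi:=|Q_{1-2\eta}|^{-1}\int_{Q_{1-2\eta}}D\psi^0$. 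Jensen's inequality applied to the convex $\overline H$ produces $\overline H(p+\overline\xi)\le c_*$, and the Lipschitz estimate inherited from \eqref{Hp-Hq} by $\overline H$ yields $|\overline H(p)-\overline H(p+\overline\xi)|\le C(|p|^{\gamma-1}+1)|\overline\xi|\le C(|p|^\gamma+1)\eta$, completing the proof. The chief technical subtlety is the pathwise homogenization step: one must apply the almost sure result to a sequence whose right-hand side $\overline H_{L_n}(p,\omega)$ varies with $n$, on the same realization $\omega$ that produces the $\chi_{L_n}$; this works because the periodization and $\overline H_L(p,\omega)$ are defined pathwise, and stability of viscosity solutions absorbs the variation of the right-hand side once $\overline H_{L_n}(p,\omega)\to c_*$.
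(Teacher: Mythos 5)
Your proof is correct and follows essentially the same route as the paper: take the $L$-periodic corrector $\chi_L$, rescale to the unit cube, pass to the limit along a minimizing sequence, invoke pathwise homogenization on $\operatorname{Int}(Q_{1-2\eta})$, exploit $\int_{Q_1}D\Phi=0$ from periodicity, and apply Jensen for the convex $\overline H$. The only cosmetic difference is at the final bookkeeping step: the paper applies Jensen over the full cube $Q_1$ and controls the contribution of the thin shell $Q_1\setminus Q_{1-2\eta}$ directly via the growth bound \eqref{estBarH}, whereas you apply Jensen over $Q_{1-2\eta}$ alone, bound the mean $\overline\xi$ of $D\psi^0$ by $C(|p|+1)\eta$ using the cancellation on $Q_1$, and then invoke a Lipschitz estimate on $\overline H$ to compare $\overline H(p)$ with $\overline H(p+\overline\xi)$. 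This last variant requires the (true but not explicitly recorded in the paper) fact that $\overline H$ inherits the local Lipschitz modulus \eqref{Hp-Hq}, while the paper's version needs only \eqref{estBarH}; both close the argument with the same $O((|p|^\gamma+1)\eta)$ error.
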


\begin{proof} Choose  $\omega\in \Omega$ for which homogenization holds (recall that this is the case for almost all $\omega$) and fix  $p\in \R^d$.  Let 
$\chi^p_L$ be a corrector for the $L-$periodic problem, i.e., a continuous, $L-$periodic solution of  \eqref{ergoeqIntro}.
Without loss of generality we assume that $\chi^p_L(0,\omega)=0$. Moreover,  since $H_L$ is coercive, there exists (see \cite{LS2}) a constant $\Cr{defH01}$, which depends only on  $\Cr{C-Const0}$, such that $\|D\chi^p_L+p\|_\infty\leq \Cr{defH01}(|p|+1)$. 
\vskip.05in

\noindent Define $\ds \Phi^p_L(x,\omega): = {L}^{-1} \chi^p_L(Lx,\omega)$. It follows that the $\Phi^p_L$'s  are $1-$periodic,  Lipschitz continuous uniformly in $L$, since$\|D\Phi^p_L+p\|_\infty\leq \Cr{defH01}(|p|+1)$,  and uniformly bounded in $\R^d$, since $\Phi^p_L(0,\omega)=0$. Moreover,  $$
-{L}^{-1}{\rm tr}\left(A_L(Lx,\omega)D^2\Phi^p\right) +H_L(D\Phi^p_L+p, Lx, \omega)= \overline H_L(p,\omega) \  {\rm in} \  \R^d\;. 
$$
Since $H_L= H$ and $A_L=A$  in  $Q_{L(1-2\eta)}$, we also have 
$$
-{L}^{-1}{\rm tr}\left(A(Lx,\omega)D^2\Phi^p_L\right) +H(D\Phi^p_L+p, Lx, \omega)=  \overline H_L(p,\omega) \  {\rm in} \ \text{Int}(Q_{1-2\eta})\;. 
$$
Let $L_n\to+\infty$ be such that $\ds H_{L_n}(p,\omega)\to \liminf_{L\to+\infty} H_L(p,\omega)$. The equicontinuity and equiboundedness of the $\Phi^p_L$'s
yield a further subsequence, which for notational simplicity we still denote by $L_n$, such that the $\Phi^p_{L_n}$'s converge uniformly in $\R^d$ to a Lipschitz continuous, $1$-periodic map $\Phi^p:\R^d \to \R$. Note that, by periodicity
\be\label{e.meanzero}
\ds \int_{Q_1} D\Phi^p=0.
\ee 
\noindent Since homogenization holds for the  $\omega$ at hand, by the choice of the subsequence, we have both in the viscosity and a.e. sense that  
\be\label{HDPhi}
\overline H(D\Phi^p+p)=  \liminf_{L\to+\infty} \overline H_L(p,\omega) \  \mbox{\rm  in} \  \text{ Int} (Q_{1-2\eta}). 
\ee
It then follows from \eqref{e.meanzero}, the convexity of $\overline H$ and Jensen's inequality that 
$$\ds \overline H(p) \leq   \ds \int_{Q_1} \overline H(D\Phi^p+p).$$ 
\noindent Using  the bound on $\|D\Phi^p\|$  
together with \eqref{estBarH} and \eqref{HDPhi}, we get
$$
\begin{array}{rl}
\ds  \int_{Q_1} \overline H(D\Phi^p+p) \; \leq &\ds  \int_{Q_{1-2\eta}} \overline H(D\Phi^p+p)+ \Cr{C-Const0}(\|D\Phi^p+p\|_\infty^\gamma+1) \left| Q_1\backslash Q_{1-2\eta}\right|\\
\leq & \ds  (1-2\eta)^d \liminf_{L\to+\infty} \overline H_L(p,\omega)+  C(|p|^{\gamma}+1)\eta,\\
\end{array}
$$
and, after employing \eqref{estBarH} once more, 
$$
\overline H(p) \; \leq \;  \liminf_{L\to+\infty} \overline H_L(p,\omega)+ C(|p|^{\gamma}+1)\eta\;.
$$
\end{proof}

\noindent To state the next lemma recall that, for any $\delta>0$ and $p\in \R^d$, $v^\delta(\cdot, \omega;p)$ solves  \eqref{approxcorrector}. 

\begin{lem}\label{Lem:owerBound} (The lower bound) Assume \eqref{allH} and \eqref{allA}. For any $K>0$, there exists $C>0$  such that, for all $p\in B_K$,  $L=1/\delta \geq 1$  and  $\ep>0$, 
$$
\{\omega\in \Omega : \sup_{y\in Q_{1/\delta}} | \delta v^\delta(y,\omega;p)+\bar H(p)| \leq \ep\}
\subset
\{\omega\in \Omega: \  \bar H_L(p,\omega)- \bar H(p) \leq 
 \frac{C\ep }{\eta }\}.
$$ 
\end{lem}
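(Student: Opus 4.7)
The strategy is to exhibit a continuous $L$-periodic viscosity subsolution $w$ of
\[-\tr(A_L D^2 w) + H_L(Dw+p, x, \omega) \le \bar H(p) + C\varepsilon/\eta\quad\text{in }\R^d,\]
and to invoke the standard characterization of the effective Hamiltonian in periodic settings: for the convex coercive $H_L$ (convexity is inherited from $H$ and $H_0$, coercivity from \eqref{ppttH0}), $\bar H_L(p,\omega)$ equals the infimum of the $\lambda$ for which the equation $-\tr(A_L D^2 \chi) + H_L(D\chi+p,x,\omega)\le \lambda$ admits a continuous $L$-periodic subsolution. Such a $w$ therefore yields $\bar H_L(p,\omega)\le \bar H(p)+C\varepsilon/\eta$.

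Set $\phi(x):=v^\delta(x,\omega)+\bar H(p)/\delta$. By hypothesis $\|\phi\|_{L^\infty(Q_L)}\le\varepsilon/\delta=\varepsilon L$, and rewriting \eqref{approxcorrector} shows $\phi$ is a viscosity solution of $-\tr(A\,D^2\phi)+H(D\phi+p,x,\omega)=\bar H(p)-\delta\phi$, hence on $Q_L$ a subsolution of $-\tr(A\,D^2\phi)+H(D\phi+p,x,\omega)\le\bar H(p)+\varepsilon$. The a priori bound $\|D\phi+p\|_\infty\le C_1(|p|+1)$ recalled just before \eqref{pptH0} will be used repeatedly.

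Take $w(x):=(1-\zeta_L(x))\phi(x)$ with $\zeta_L(x):=\zeta(x/L)$. Since $\zeta_L\equiv 1$ on $Q_L\setminus Q_{(1-\eta)L}$, $w$ vanishes in a neighborhood of $\partial Q_L$ and extends to a continuous $L$-periodic function on $\R^d$. Verification of the subsolution property splits into three zones. On $Q_{(1-2\eta)L}$ where $\zeta_L=0$, one has $w=\phi$, $A_L=A$, $H_L=H$, and the inequality with right-hand side $\bar H(p)+\varepsilon$ is exactly the one satisfied by $\phi$. On $Q_L\setminus Q_{(1-\eta)L}$ where $\zeta_L=1$, $w\equiv 0$, $A_L=0$, $H_L=H_0$; at a viscosity touching, the test function has $D\psi(x_0)=0$ and $D^2\psi(x_0)\ge 0$, so the inequality reduces to $H_0(p)\le \bar H(p)$, which is \eqref{pptH0}.

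The heart of the matter is the transition annulus $Q_{(1-\eta)L}\setminus Q_{(1-2\eta)L}$. After sup-convolution of $\phi$ (introducing a negligible error), one formally computes $Dw=(1-\zeta_L)D\phi-(D\zeta_L)\phi$ and $D^2w=(1-\zeta_L)D^2\phi-D\zeta_L\otimes D\phi-D\phi\otimes D\zeta_L-(D^2\zeta_L)\phi$. Since $A_L=(1-\zeta_L)A$, the leading second-order contribution $-(1-\zeta_L)^2\tr(A\,D^2\phi)$ is rewritten via the equation for $\phi$ as $(1-\zeta_L)^2[\bar H(p)-\delta\phi-H(D\phi+p,x,\omega)]\le (1-\zeta_L)^2[\bar H(p)+\varepsilon-H(D\phi+p,x,\omega)]$, and the cross-terms $|\langle A\,D\zeta_L,D\phi\rangle|$ and $|\tr(A\,D^2\zeta_L)\phi|$ contribute $O((|p|+1)/(L\eta))$ and $O(\varepsilon/(L\eta^2))$, both absorbed into $C\varepsilon/\eta$. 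For the Hamiltonian, writing $Dw+p=(1-\zeta_L)(D\phi+p)+\zeta_L p-(D\zeta_L)\phi$ with $|(D\zeta_L)\phi|\le c\varepsilon/\eta$, convexity of $H_L(\cdot,x,\omega)$ combined with the pivotal bounds $H_0(D\phi+p)\le \bar H(p)$ and $H_0(p)\le \bar H(p)$ yields $H_L(Dw+p,x,\omega)\le (1-\zeta_L)^2H(D\phi+p,x,\omega)+\zeta_L(1-\zeta_L)H(p,x,\omega)+\zeta_L\bar H(p)+C\varepsilon/\eta$. The main obstacle is the residual cross-term proportional to $\zeta_L(1-\zeta_L)H(p,x,\omega)$, which is a priori of order $|p|^\gamma+1$: it must be paired against the $-(1-\zeta_L)^2H(D\phi+p,x,\omega)$ contribution from $\phi$'s equation through a more careful convexity grouping so that everything telescopes. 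In the regime $\varepsilon/\eta\gtrsim |p|^\gamma+1$ the conclusion is trivial from the a priori estimate $|\bar H_L(p,\omega)-\bar H(p)|\lesssim |p|^\gamma+1$, while in the complementary regime the refined grouping closes the estimate at $\bar H(p)+C\varepsilon/\eta$.
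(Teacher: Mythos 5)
Your overall strategy---construct a continuous $L$-periodic subsolution of the $A_L,H_L$ corrector problem with right-hand side $\bar H(p)+C\ep/\eta$ and conclude by comparison---is exactly the paper's. The choice of subsolution, however, is different, and this is where the proposal breaks down. You take $w=(1-\zeta_L)\phi$ with $\phi=v^\delta+\bar H(p)/\delta$, i.e.\ you reuse the \emph{same} cutoff $\zeta$ that defines $A_L$ and $H_L$. You then correctly identify the obstacle: in the transition annulus $Q_{(1-\eta)L}\setminus Q_{(1-2\eta)L}$, where $\zeta_L\in(0,1)$, the equation for $\phi$ only controls the combined quantity $-\tr(A\,D^2\phi)+H(D\phi+p,x,\omega)$, but your computation produces it with coefficient $(1-\zeta_L)^2$ while the convexity split of $H_L(Dw+p,x,\omega)$ simultaneously generates the cross term $\zeta_L(1-\zeta_L)H(p,x,\omega)$. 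After grouping everything (including the $H_0$ piece via \eqref{pptH0}), what remains above $\bar H(p)$ is precisely $\zeta_L(1-\zeta_L)\bigl[H(p,x,\omega)-\bar H(p)\bigr]$ plus $O_K(\ep/\eta)$. This residual is of size $O(1)$ at points where $\zeta_L\approx 1/2$ and $H(p,x,\omega)$ saturates the upper bound in \eqref{coercivite}; it is not of order $\ep/\eta$ and no convexity regrouping can make it so, because the only information on $H(D\phi+p,\cdot,\cdot)$ is through the equation, which enters with the wrong power of $(1-\zeta_L)$. The claim that ``the refined grouping closes the estimate'' in the regime $\ep/\eta\lesssim|p|^\gamma+1$ is therefore unjustified and, I believe, false. (There is also a secondary defect: the cross term $(1-\zeta_L)\langle A\,D\zeta_L,D\phi\rangle$ is of size $(|p|+1)/(L\eta)$, which is not absorbed by $C\ep/\eta$ without an assumption relating $L$ and $\ep$ that the lemma does not make.)

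The paper's construction is designed specifically to avoid this annulus interaction. It introduces a \emph{second} cutoff $\xi$, with $\xi=1$ on $Q_{1-\eta}$ and $\xi=0$ on $Q_1\setminus Q_{1-\eta/2}$, so that the transition region of $\xi$ lies entirely inside the set where $\zeta=1$, and the transition region of $\zeta$ lies entirely inside the set where $\xi=1$. The subsolution is $\Psi_L=\xi(\cdot/L)\,v^\delta-(1-\xi(\cdot/L))\,\bar H(p)/\delta$ (equivalently, up to an additive constant, $\xi(\cdot/L)\phi$, with the opposite cutoff from yours). As a consequence one has identically $A_L D^2\Psi_L=(1-\zeta_L)A\,D^2 v^\delta$ (no cross terms in the second-order part, and hence no $(|p|+1)/(L\eta)$ contribution), and $H_L(D\Psi_L+p,x,\omega)=(1-\zeta_L)H(Dv^\delta+p,x,\omega)+\zeta_L H_0(D\Psi_L+p)$. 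The first group is controlled, with the \emph{correct} linear coefficient $(1-\zeta_L)$, by the equation for $v^\delta$; the second is controlled by convexity of $H_0$ together with the two inequalities in \eqref{pptH0}. The uncontrolled term $\zeta_L(1-\zeta_L)H(p,x,\omega)$ never appears. If you replace your $w=(1-\zeta_L)\phi$ by $\xi_L\phi$ with $\xi$ as above (so the support of $D\xi_L$ is disjoint from the support of $1-\zeta_L$), your three-zone verification goes through with none of the difficulties you were fighting.
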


\begin{proof} 
Fix  $\omega\in \Omega$  such that 
\be\label{supdeltavdelta}
\sup_{y\in Q_{1/\delta}} | \delta v^\delta(y,\omega;p)+\bar H(p)| \leq \ep,
\ee
and let $\xi: \R^d\to\R$ be a smooth $1-$periodic map such that 
$$\xi=1 \ \text{in} \  Q_{1-\eta}, \  \xi=0 \ \text{in  \ $Q_1\backslash Q_{1-\eta/2}$,} \   
\|D\xi\|_\infty\leq C{\eta}^{-1}  \   \text{ and \ $\|D^2\xi\|_\infty\leq C{\eta^{-2}}$.}$$
\vskip.05in

\noindent Recall that $L=1/\delta$,  define 
$$
\Psi_L(x,\omega):= \xi(\frac{x}{L})v^\delta(x,\omega; p)-(1-\xi(\frac{x}{L})) \frac{\bar H(p)}{\delta} \ {\rm in } \  Q_L
$$
and extend $\Psi_L(\cdot,\omega)$ periodically (with period $L$) over $\R^d$. 
\vskip.05in

\noindent The goal is to estimate the quantity
$\ds\ -  {\rm tr}(A_L(x,\omega)D^2\Psi_L) + H_L(D\Psi_L+p, x, \omega).\ $
In what follows  we argue as if $\Psi_L$ were smooth, the computation being actually correct in the viscosity sense. 
\vskip.05in

\noindent Observe that, if $x\in Q_{1/\delta}$,  then
$$
D\Psi_L(x,\omega)= \xi(\frac{x}{L})Dv^\delta(x,\omega;p)+ \frac{1}{L}D\xi(\frac{x}{L})(v^\delta(x,\omega;p) + \frac{\bar H(p)}{\delta}) .
$$
Hence, in view of \eqref{supdeltavdelta}, 
\be\label{estiDPsi}
| D\Psi_L(x,\omega)- \xi(\frac{x}{L})Dv^\delta(x,\omega;p)| \leq \frac{C\ep}{\eta L\delta} = \frac{C\ep }{\eta } .
\ee
Note that $\Psi_L= v^\delta$ in $Q_{L(1-\eta)}$ while $\zeta(\cdot/L)=1$ in $Q_L\backslash Q_{L(1-\eta)}$. 
It then follows from the definition of $A_L$ and $H_L$ that,  when $x\in Q_L$, 
\be\label{ALHLvdelta}
\begin{array}{l}
\ds -  {\rm tr}(A_L(x,\omega)D^2\Psi_L) + H_L(D\Psi_L+p, x, \omega) \\ 
\quad = \;  
\ds (1- \zeta(\frac{x}{L}))\left[ -  {\rm tr}(A(x,\omega)D^2\Psi_L) + H(D\Psi_L+p, x, \omega) \right]  
+ \zeta(\frac{x}{L}) H_0(D\Psi_L+p)\\[2mm]
\quad = \;  
\ds (1- \zeta(\frac{x}{L}))[ -  {\rm tr}(A(x,\omega)D^2v^\delta) + H(Dv^\delta+p, x, \omega)]  
+ \zeta(\frac{x}{L}) H_0(D\Psi_L+p).
\end{array}
\ee
We now estimate each term in the right-hand side of \eqref{ALHLvdelta} separately. For the first,
in view of  \eqref{approxcorrector} and \eqref{supdeltavdelta}, we have 
$$
\ds  -  {\rm tr}(A(x,\omega)D^2v^\delta) + H(Dv^\delta+p, x, \omega)  = -\delta v^\delta(x,\omega) \leq \bar H(p)+\ep,
$$
while for the second  we use \eqref{estiDPsi}, the convexity of $H_0$ and the Lipschitz bound on $v^\delta$ to find
$$
\begin{array}{rl}
\ds H_0(D\Psi_L+p) \; \leq & \ds H_0(\xi(\frac{x}{L})Dv^\delta+p) + \frac{C\ep}{\eta}  \\
\leq & \ds \xi(\frac{x}{L}) H_0(Dv^\delta+p)+ (1-\xi(\frac{x}{L}))H_0(p) + \frac{C\ep}{\eta},
\end{array}
$$
and,  in view of  \eqref{pptH0}, deduce that 
$$
H_0(D\Psi_L+p) \leq \overline H(p)+ \frac{C\ep}{\eta}.
$$
Combining the above estimates we find (recall that $\eta\in (0,1)$) 
\be\label{ineqALHLvdelta}
\ds -  {\rm tr}(A_L(x,\omega)D^2\Psi_L) + H_L(D\Psi_L+p, x, \omega) \; \leq \; 
 \bar H(p) + \frac{C\ep}{\eta}.
\ee
Since $\Psi_L$ is $L-$periodic subsolution for the corrector equation associated to $A_L$ and $H_L$, the classical comparison of viscosity solutions (\cite{B}) yields
$$
\bar H_L(p,\omega)\leq \bar H(p)+
 {C\ep }/{\eta } 
.$$
\end{proof}
\noindent We are now ready to present the 

\begin{proof}[Proof of Theorem \ref{th:mainCV}]
In view of Lemma  \ref{lem:upperbound}, we only have to show that 
\be\label{amqpourth:mainCV}
\ds \limsup_{L\to+\infty} \overline H_L(p,\omega)\leq \overline H(p).
\ee
Fix $p\in \R^d$, let $v^\delta$ be the solution to \eqref{approxcorrector}, and  recall that  the $\delta v^\delta$'s  converge a.s. to $-\bar H(p)$ uniformly in cubes of radius $1/\delta$, that is, a.s. in $\omega\in \Omega$, for any $\ep>0$, there exists $\bar \delta=\delta(\omega)$ such that, if $\delta\in (0,\bar \delta)$, 
$$
\sup_{y\in Q_{1/\delta}} | \delta v^\delta(y,\omega; p)+\bar H(p)| \leq \ep. 
$$
For such an $\omega$,  Lemma \ref{Lem:owerBound} implies that, for $L=1/\delta$,  
$$
\bar H_L(p,\omega)\leq \bar H(p)+
 {C\ep }/{\eta }
.$$
Letting first $L\to+\infty$ and then $\ep\to 0$ yields \eqref{amqpourth:mainCV}. 
\end{proof}

\section{Error estimate for viscous HJB equations}\label{sec:rate}

\noindent Here we show that it is possible  to quantify the convergence of $\overline H_L(\cdot,\omega)$ to $\overline H$. For this we assume that  we have an algebraic rate of convergence for the solution $v^\delta=v^\delta(x,\omega;p)$ of  \eqref{approxcorrector}
towards the ergodic constant $\bar H(p)$, that is we suppose that there exists $\bar a\in (0,1)$ and, for each $K>0$ and $m>0$, a map 
$\delta\to \Cl[c]{ratevdelta}^{K,m}(\delta)$ with $\lim_{\delta\to 0} \Cr{ratevdelta}^{K,m}(\delta)=0$ such that 
\be\label{HypRateHJ}
\Prob[\sup_{(y,p)\in Q_{\delta^{-m}}\times B_K}|\delta v^\delta(y,\cdot; p)+\overline H(p)| >\delta^{\bar a}]
\leq \Cr{ratevdelta}^{K,m}(\delta);
\ee
notice that \eqref{HypRateHJ} implies that the convergence of $\delta v^\delta(y,\cdot; p)$ in balls of radius $\delta^{-m}$ is not slower than $\delta^{\bar a}$. 
\vskip.05in

\noindent A rate of convergence like \eqref{HypRateHJ} is shown to hold for Hamilton-Jacobi equations in \cite{ACS} and for viscous HJB in \cite{AC} under some additional assumptions on $H$ and the environment.
\noindent
The first assumption, which  is about the shape of the level sets of $H$, is that, for every $p,y \in \R^d$ and $\omega \in \Omega$
$$
H(p,y,\omega) \geq H(0,0,\omega)  \  \text{and} \  \esssup_{\omega \in \Omega} H(0,0,\omega)=0.
$$
As explained in \cite{ACS}, from the point of view of control theory, the fact that there is a common $p_0$ for all $\omega$ at which $H(\cdot,0,\omega)$ has a minimum  provides ``some controllability''. No generality is lost by assuming $p_0=0$ and $ \esssup_{\omega \in \Omega} H(0,0,\omega)=0$.
\vskip.05in

\noindent  As far as the environment is concerned,  \eqref{HypRateHJ} is known to hold  for ``i.i.d."  environments and under an additional suitable condition on $H$ at $p=0$ or, more precisely, for $p$'s in the flat spot of $\overline H$.  What is an  ``i.i.d.'' environment was explained in the introduction. Here for each $E\in \B$, $\mathcal G(E)$ is the 
$\sigma$-field on $\Omega$ generated by the random variables $A(x,\cdot)$ and $H(p,x,\cdot)$ for $x\in E$ and $p\in \R^d$. 
It was shown  in \cite{ACS} that the $\delta v^\delta$'s may converge  arbitrarily slow  for $p$'s such that $\overline H(p)=0$. For Hamilton-Jacobi equations, i.e., when $A\equiv0$, a sufficient condition for \eqref{HypRateHJ} is the existence of constants $\theta>0$ and $c>0$ such that 
$$
\Prob\left[ H(0,0,\cdot) > - \lambda \right] \geq c \lambda^\theta.
$$
\noindent It is shown in \cite{AC} that for viscous HJB equations the above condition has to be strengthened in the following way: there exist $\theta>0$ and $c>0$ such that, for   $(p,x,\omega)\in \R^d\times \R^d\times \Omega$  and  $p \in B_1$,
$$
H(p, x, \omega)\geq c |p|^\theta. 
$$
In both cases, the function $\Cr{ratevdelta}^{K,m}$ is of the form
$$
\Cr{ratevdelta}^{K,m}(\delta)= C\delta^{-r}\exp(-C\delta^{-b}),
$$
where $r$, $C$ and $b$ are positive constants depending on $A$, $H$, $K$ and $m$. 

\noindent The periodic approximation $A_L$ and $H_L$ is the same as in the previous section except that now $\eta \to 0$ as $L\to+\infty$. The rate at which $\eta \to 0$ depends on the assumption on the medium. As before the smooth $1-$periodic $\zeta$ satisfies \eqref{zeta}.
\vskip0.05in

\noindent We choose $\eta_L\to0$ and, to simplify the notation, we write 
$$
\zeta_L(x):=\zeta_{\eta_L}(\frac{x}{L}).
$$
Note that $\zeta_L$ is now $L-$periodic. 
\vskip.05in

\noindent The result is:

\begin{thm}\label{thm:Cvrate}  Assume \eqref{allH}, \eqref{allA} and  \eqref{HypRateHJ} and set  $\eta_L= L^{-\frac{\bar a}{4(\bar a+1)}}.$
For any $K>0$, there exists a constant $C>0$ such that, for $L\geq 1$, 
$$
\Prob [ \sup_{|p|\leq K} \left|\bar H\left(p\right)-
 \bar H_L(p,\cdot)\right|>  CL^{-\frac{\bar a}{4(\bar a+1)}}]  \leq 2 \Cr{ratevdelta}^{ C, 2({\bar a}+1)}(L^{-\frac{1}{2(\bar a+1)}}).
 $$
\end{thm}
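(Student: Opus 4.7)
The overall strategy is to establish the two one-sided estimates
$$\bar H_L(p,\omega) - \bar H(p) \leq C\alpha \quad\text{and}\quad \bar H(p) - \bar H_L(p,\omega) \leq C\alpha,$$
where $\alpha := L^{-\bar a/(4(\bar a+1))} = \eta_L$, each on an event of probability at least $1 - \Cr{ratevdelta}^{C, 2(\bar a+1)}(L^{-1/(2(\bar a+1))})$, and to combine them via the union bound, which is what produces the factor $2$ in the statement. Set $\delta := L^{-1/(2(\bar a+1))}$, so that $\delta^{\bar a} = \alpha^2$ and $\delta^{-m} = L$ for $m = 2(\bar a+1)$; by \eqref{HypRateHJ}, outside an event of the announced probability,
$$\sup_{(y,p) \in Q_L \times B_K} |\delta v^\delta(y,\omega;p) + \bar H(p)| \leq \alpha^2.$$

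For the first estimate (quantitative form of Lemma~\ref{Lem:owerBound}) I would replay the argument of that lemma with the cutoff and the periodic extension performed on the cube $Q_L$ rather than on $Q_{1/\delta}$. This is legitimate because the control on $\delta v^\delta + \bar H(p)$ is now available throughout $Q_L$, including the annulus $Q_L \setminus Q_{L(1-\eta_L)}$ where the cutoff's derivative is supported. The gradient bound \eqref{estiDPsi} becomes $|D\Psi_L - \xi(x/L) Dv^\delta| \leq C\alpha^2/(\eta_L L \delta)$, and carrying the rest of the proof through yields $\bar H_L - \bar H \leq C\alpha^2/(\eta_L L \delta) = CL^{-(5\bar a+2)/(4(\bar a+1))} \leq C\alpha$ uniformly in $p \in B_K$.

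For the second estimate (quantitative form of Lemma~\ref{lem:upperbound}) I would take the periodic corrector $\chi^p_L$ with $\|D\chi^p_L + p\|_\infty \leq \Cr{defH01}(|p|+1)$, set $\Phi^p_L(y) := L^{-1}\chi^p_L(Ly)$, and use Jensen's inequality together with periodicity to write
$$\bar H(p) \leq \int_{Q_{1-2\eta_L}} \bar H(D\Phi^p_L + p)\,dy + C(|p|^\gamma+1)\eta_L,$$
which reduces the matter to the pointwise upper bound $\bar H(D\Phi^p_L(y) + p) \leq \bar H_L(p,\omega) + C\alpha$ on $Q_{1-2\eta_L}$. This is where a second application of \eqref{HypRateHJ} enters: on the favorable event, $v^\delta(\cdot;p)$ solves the corrector equation with right-hand side $\bar H(p) + O(\alpha^2)$ on $Q_L$, while $\chi^p_L$ solves the same equation with right-hand side $\bar H_L(p,\omega)$ on $Q_{L(1-2\eta_L)}$ (where $A_L = A$ and $H_L = H$). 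If $\bar H_L(p,\omega) < \bar H(p) - C\alpha$ with $C$ large, then $v^\delta$ becomes a strict supersolution of the $\chi^p_L$ equation there, and, exploiting the $L$-periodicity of $\chi^p_L$ and the stationarity of $v^\delta$ to translate the maximum of $\chi^p_L - v^\delta$ into the interior of $Q_{L(1-2\eta_L)}$, the viscosity maximum principle combined with the convexity linearization of $H$ yields a contradiction.

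The main obstacle is making precise this maximum-principle step in Direction 2: unlike $\chi^p_L$, the function $v^\delta$ is stationary rather than periodic, so a priori the maximum of $\chi^p_L - v^\delta$ over $Q_{L(1-2\eta_L)}$ need not be interior. I expect to handle this by introducing a smooth penalization supported on a boundary layer of width $\sim\eta_L L$, working with a sup-convolution of $v^\delta$ at the same scale to secure the viscosity inequality for the perturbed difference, and carefully balancing the resulting errors against $\alpha$. The specific exponents $\eta_L = L^{-\bar a/(4(\bar a+1))}$ and $\delta = L^{-1/(2(\bar a+1))}$ are precisely what makes all these errors close up at the single rate $\alpha$.
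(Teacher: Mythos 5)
Your treatment of the ``lower bound'' direction (showing $\bar H_L(p,\omega)-\bar H(p)\leq C\eta_L$ on the favorable event) is correct and is essentially what the paper does: apply Lemma~\ref{Lem:owerBound}'s construction with $L$ and $\delta$ decoupled, use \eqref{HypRateHJ} with $m=2(\bar a+1)$ to get the sup bound on $Q_L=Q_{\delta^{-m}}$, and note that the error $C\lambda/(\eta_L L\delta)$ is $\leq C\lambda/\eta_L=C\eta_L$. Your arithmetic there checks out.

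The ``upper bound'' direction, $\bar H(p)-\bar H_L(p,\omega)\leq C\eta_L$, has a genuine gap, and the route you sketch does not match the paper's. You propose to apply Jensen's inequality to $\Phi_L^p:=L^{-1}\chi_L^p(L\cdot)$ and then establish the pointwise bound $\bar H(D\Phi_L^p(y)+p)\leq\bar H_L(p,\omega)+C\eta_L$, but the argument you then outline---a comparison of $\chi_L^p$ against $v^\delta(\cdot;p)$ at the single slope $p$---cannot produce that pointwise inequality: at an interior touching point of the penalized difference $\chi_L^p-v^\delta-\mathrm{pen}$, the gradients of $\chi_L^p$ and $v^\delta$ nearly agree, so the conclusion concerns $\bar H(p)$ versus $\bar H_L(p,\omega)$ at that one point, not $\bar H$ evaluated at the local corrector gradient over $Q_{L(1-2\eta_L)}$. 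Moreover the penalization itself cannot close the errors at the scale you work at: $\chi_L^p$ is $L$-periodic and Lipschitz with an $O(1)$ constant, so its oscillation over $Q_L$ is of order $L$; any penalization forcing the maximum of the difference into the interior of $Q_{L(1-2\eta_L)}$ must compensate an $O(L)$ mismatch across an annulus of width $O(\eta_L L)$, giving a gradient of order $1/\eta_L\gg\eta_L$, so the linearization error $C\,|D\mathrm{pen}|$ you would pick up is much larger than the target $\eta_L$. The ``sup-convolution of $v^\delta$'' you invoke does not address either issue.

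The paper's Proposition~\ref{propupperbound} proceeds quite differently. It first rescales the corrector to $w^\ep(x)=\ep\chi_L(x/\ep)$, $\ep=1/L$, so that the oscillation becomes $O(1)$, then sup-convolves $w^\ep$ (not $v^\delta$) to get $w^{\ep,\gamma}$. At each differentiability point $\bar x$ of $w^{\ep,\gamma}$, with $\bar y$ the corresponding sup-convolution maximizer, it compares $w^\ep$ with $\ep v^\delta(\cdot/\ep;\ p')$ at the \emph{point-dependent, shifted slope} $p'=\frac{\bar y-\bar x}{\gamma}+p$, via a doubling-of-variables device with three auxiliary parameters $\gamma,\kappa,\sigma$. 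This yields the pointwise bound $\bar H(Dw^{\ep,\gamma}(\bar x)+p)\leq\bar H_L(p,\omega)+\lambda+C(\ep/\gamma+\kappa\ep+(\ep/(\delta\kappa))^{1/2}(1/\gamma+\kappa))$; the choices $\kappa=(\ep\delta)^{-1/3}\gamma^{-2/3}$ and $\gamma=\ep^{1/4}\delta^{-1/2}$ produce the rate $\eta_L+L^{-1/4}\delta^{-1/2}$. Jensen is then applied to $Dw^{\ep,\gamma}$, not to $D\Phi_L^p$, and the sup-convolution regularity is what makes both the Jensen step and the pointwise evaluation legitimate. The shift of slope by $\frac{\bar y-\bar x}{\gamma}$ and the rescaling to the unit cube are both essential and are missing from your sketch; as you yourself note, your plan leaves the key step open.
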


\noindent The main idea of the proof, which is reminiscent to the approach of Capuzzo Dolcetta and Ishii  \cite{ICD} and  \cite{ACS}, is that $  \left|\bar H\left(p\right)-
 \bar H_L(p,\cdot)\right|$ can be controlled by  $\left|\delta v^\delta(z,\omega; p')+\bar H(p')\right|$. The later one is estimated by the convergence rate assumption \eqref{HypRateHJ}. As in the proof of Theorem \ref{th:mainCV} it is important to obtain estimates for the lower and upper bounds. Since the former   is a straightforward application of Lemma \ref{Lem:owerBound}, here we only present the details for the latter.

\subsection*{Estimate for the upper bound} We state the upper bound in the following proposition.
\begin{prop}\label{propupperbound} For any $ K>0$, there exists $C>0$ such that, for any $L\geq 1$, $\lambda\in (0,1]$ and $\delta>0$, 
$$
\begin{array}{l}
\ds \{ \omega\in \Omega: \ \sup_{p\in B_K} [\bar H\left(p\right)-
 \bar H_L(p,\omega) ]>  \lambda + C (\eta_L+ L^{-\frac14}\delta^{-\frac12})\} \\
\qquad \qquad \ds \subset  \{\omega\in \Omega: \   \inf_{(z,p') \in Q_{L} \times  B_C} [ -\delta v^\delta(z,\omega; p')-\bar H(p')] < -\lambda \}.
\end{array}
$$
\end{prop}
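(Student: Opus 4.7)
The strategy would adapt the upper-bound proof of Lemma~\ref{lem:upperbound} to a quantitative setting, replacing the asymptotic homogenization step (invoked there to pass to the limit along a subsequence $L_n\to\infty$) by a direct deterministic comparison against the approximate corrector $v^\delta$. The contrapositive of the proposition is the useful formulation: if $\delta v^\delta(z,\omega;p')+\overline H(p')\leq\lambda$ for every $(z,p')\in Q_L\times B_C$, one must show $\overline H_L(p,\omega)\geq \overline H(p)-\lambda-C(\eta_L+L^{-1/4}\delta^{-1/2})$ for all $p\in B_K$.

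I would start, as in Lemma~\ref{lem:upperbound}, by taking the $L$-periodic corrector $\chi_L^p$ of the periodic problem, normalized so that $\chi_L^p(0)=0$. The coercivity of $H_L$ yields $\|D\chi_L^p+p\|_\infty\leq C$, and since $A_L=A$ and $H_L=H$ on $Q_{L(1-2\eta)}$, $\chi_L^p$ actually solves $-\tr(A(x,\omega)D^2\chi_L^p)+H(D\chi_L^p+p,x,\omega)=\overline H_L(p,\omega)$ in that interior cube. Under the contrapositive assumption, the defining equation of $v^\delta$ becomes $-\tr(AD^2 v^\delta)+H(Dv^\delta+p',x,\omega)=-\delta v^\delta\geq \overline H(p')-\lambda$ on $Q_L$, so $v^\delta(\cdot;p')$ is a viscosity supersolution of the original (non-periodic) equation with right-hand side $\overline H(p')-\lambda$ on $Q_L$.

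The core step would be a doubling-of-variables comparison between $\chi_L^p$ (as subsolution with RHS $\overline H_L(p,\omega)$ in $Q_{L(1-2\eta)}$) and $v^\delta(\cdot;p)$ (as supersolution with RHS $\overline H(p)-\lambda$ on $Q_L$). I would introduce on $Q_L\times Q_L$ the test function
\be\label{eq:plan-doubling}
\Psi(x,y)=\chi_L^p(x)-v^\delta(y;p)-\frac{|x-y|^2}{2\sigma}-\beta\,\psi(x/L),
\ee
where $\psi\geq 0$ is a smooth $1$-periodic cutoff vanishing on $Q_{1-3\eta}$ and bounded below by a large constant outside $Q_{1-2\eta}$. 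For $\beta$ suitably large, the maximum $(x^*,y^*)$ of $\Psi$ is forced into $Q_{L(1-2\eta)}\times Q_L$, where both the equation for $\chi_L^p$ and the supersolution property of $v^\delta(\cdot;p)$ are in force. The Crandall--Ishii lemma at $(x^*,y^*)$, together with the regularity \eqref{Hx-Hy}--\eqref{Hp-Hq}, \eqref{Sx-Sy} and the Lipschitz estimate $|x^*-y^*|\leq C\sigma$, would give $\overline H_L(p,\omega)\geq \overline H(p)-\lambda-C\eta-C\,E(\sigma,\beta,L,\delta)$, where $E$ collects the doubling and penalty errors. Optimizing $\sigma$ (and $\beta$, subject to the constraint that it be large enough to relocate the maximum) would produce the claimed bound $\eta+L^{-1/4}\delta^{-1/2}$: the boundary-layer loss $\eta$ arises exactly as in Lemma~\ref{lem:upperbound}, while $L^{-1/4}\delta^{-1/2}$ emerges from the doubling-versus-penalty balance, with the $\delta^{-1/2}$ factor entering through the $O(\delta^{-1})$ amplitude of $v^\delta$.

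The principal obstacle I anticipate is reconciling the very different scales of $\chi_L^p$ (amplitude $O(L)$ on $Q_L$) and $v^\delta$ (amplitude $O(\delta^{-1})$): the penalty $\beta\,\psi(x/L)$ must be strong enough to pull the maximum of $\Psi$ into the interior cube, yet not so strong as to let the penalty-gradient terms $\beta|D\psi|/L$ produce an unacceptably large Hamiltonian increment via \eqref{Hp-Hq}, nor $\beta\|D^2\psi\|/L^2$ an unacceptably large viscous contribution via \eqref{Sx-Sy}. Balancing these penalty residuals against the doubling error $C\sigma(|p|^\gamma+1)$ and the Crandall--Ishii viscous bound at $(x^*,y^*)$ is what ultimately fixes the rate $L^{-1/4}\delta^{-1/2}$ and identifies the correct optimal regime for $\sigma$ in terms of $L$ and $\delta$.
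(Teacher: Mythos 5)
Your general setup is right -- take the periodic corrector $\chi_L^p$, observe that $H_L=H,\ A_L=A$ on $Q_{L(1-2\eta_L)}$, use the contrapositive hypothesis to turn $v^\delta$ into a supersolution with right-hand side $\overline H(p')-\lambda$, and run Crandall--Ishii -- but the way you propose to close the argument has a genuine quantitative gap. The penalty $\beta\,\psi(x/L)$ must beat the oscillation of $\chi_L^p(x)-v^\delta(y;p)$ over $Q_L\times Q_L$ in order to force the maximum of $\Psi$ into the interior, and this oscillation is of order $L+\delta^{-1}$ (the corrector is Lipschitz on a cube of side $L$, and $\|v^\delta\|_\infty\sim \delta^{-1}$; no normalization helps because the two functions are only pinned down at a single point). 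Thus $\beta\gtrsim L+\delta^{-1}$, and the gradient of the penalty contributes $\beta\,\|D\psi\|/L\gtrsim (L+\delta^{-1})/(\eta_L L)$, which in the relevant regime $L\sim\delta^{-1}$ is of order $\eta_L^{-1}$. Feeding this into \eqref{Hp-Hq} produces a Hamiltonian error of order $\eta_L^{-1}$, not the claimed $\eta_L+L^{-1/4}\delta^{-1/2}$; the Hessian contribution $\beta\|D^2\psi\|/L^2$ causes a similar problem. There is no choice of $\sigma,\beta$ that repairs this: a wall penalty is intrinsically too expensive here.

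The paper avoids the wall altogether and uses several ingredients your plan does not have. It first rescales, setting $w^\ep(x)=\ep\chi_L(x/\ep)$ with $\ep=1/L$, so that the corrector becomes $1$-periodic with $O(1)$ Lipschitz norm. It then takes the sup-convolution $w^{\ep,\gamma}$, picks an a.e.\ differentiability point $\bar x$ in a slightly shrunk cube $Q_r$, and doubles variables with \emph{local} quadratic penalties $\frac{1}{2\gamma}|y-\bar x|^2+\frac{\kappa}{2}|y-\bar y|^2$ (no boundary wall), which pins the maximum near $\bar x$ at a cost that can actually be balanced. Crucially, the comparison is against $v^\delta(\cdot\,;p')$ with the \emph{slope-matched} momentum $p'=Dw^{\ep,\gamma}(\bar x)+p=(\bar y-\bar x)/\gamma+p$, which is what lets the hypothesis turn $-\delta v^\delta(\cdot;p')$ into $\overline H(p')-\lambda=\overline H(Dw^{\ep,\gamma}(\bar x)+p)-\lambda$. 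This yields a pointwise estimate $\overline H(Dw^{\ep,\gamma}(x)+p)\leq \overline H_L(p,\omega)+\lambda+\text{error}$ for a.e.\ $x\in Q_r$. Finally, and this step is entirely absent from your plan, one integrates over $Q_r$, uses Jensen's inequality together with the convexity of $\overline H$, and exploits $\int_{Q_1}Dw^{\ep,\gamma}=0$ (periodicity) to convert the average of $\overline H(Dw^{\ep,\gamma}+p)$ into $\overline H(p)$ up to an $O(\eta_L)$ boundary-layer loss. Without this averaging mechanism, a pointwise comparison between two correctors with no boundary data cannot produce the desired inequality between the two ergodic constants.
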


\begin{proof} 
Fix $p \in  B_K$ and let  $\chi_L^p$ be a continuous, $L-$periodic solution of the corrector equation  \eqref{ergoeqIntro}; recall that $\chi_L^p$ 
is Lipschitz continuous with Lipschitz constant $\bar L=\bar L(K)$. 

\noindent Set $\ep=1/L$ and consider $w^\ep(x):= \ep \chi_L(\frac{x}{\ep})$ which solves  
$$
-\ep{\rm tr}\left(A_L(\frac{x}{\ep},\omega)D^2w^\ep\right) +H_L(Dw^\ep+p, \frac{x}{\ep}, \omega)= \overline H_L(p,\omega)\qquad {\rm in} \  \R^d\;. 
$$
Note that $w^\ep$ is $1-$periodic and Lipschitz continuous with constant $\bar L$. Moreover, in view of the definition of $A_L$ and $H_L$, 
\be\label{eqwepdansQ1moins}
-\ep{\rm tr}\left(A(\frac{x}{\ep},\omega)D^2w^\ep\right) +H(Dw^\ep+p, \frac{x}{\ep}, \omega)= \overline H_L(p,\omega) \  {\rm in} \ Q_{1-2\eta_L}\;. 
\ee
Fix  $\gamma>0$ to be chosen later and consider the sup-convolution $w^{\ep,\gamma}$ of $w^\ep$, which is given by 
$$
w^{\ep,\gamma}(x,\omega):= \sup_{y\in \R^d} ( w^\ep(y,\omega)-\frac{1}{2\gamma}|y-x|^2).
$$
Note that $w^{\ep,\gamma}$ is also $1-$periodic and, by the standard properties of the sup-convolution (\cite{B}, \cite{CIL}), 
$\|Dw^{\ep,\gamma}\|_\infty\leq \|Dw^\ep\|_\infty\leq \bar L$.
\vskip.05in

\noindent The main step of the proof is the following lemma which we prove after the end of the ongoing proof.
\begin{lem}\label{Lem:estimate} 
There exists a sufficiently large constant $C>0$ with the property that,  for any $\kappa >0$ and any $\omega\in \Omega$, if
\be\label{hypopoomega}
\inf_{(z,p') \in Q_{L} \times B_{2\bar L}} [ -\delta v^\delta(z,\omega; p')-\bar H(p')] \geq -\lambda,
\ee
then, for a.e. $x\in Q_r$ with $r = 1-2\eta_L-(C+\bar L)( \gamma +\left(\frac{\ep}{\delta\kappa}\right)^{\frac12})$,
\be\label{eqwepgamma}
\overline H\left(Dw^{\ep,\gamma}(x)+p\right)
\leq 
 \overline H_L(p,\omega)+  \lambda +  C(\frac{1}{\gamma}+\kappa) ( \ep + (\frac{\ep}{\delta\kappa})^{\frac12}).
\ee 
\end{lem}
\noindent We complete the ongoing proof. Jensen's inequality yields, after integrating \eqref{eqwepgamma} over $Q_r$, 
 \begin{multline*}
\overline H\left(r^{-d}\int_{Q_r} Dw^{\ep,\gamma}(x)dx+p\right)
\leq r^{-d}\int_{Q_r}\bar H\left( Dw^{\ep,\gamma}(x)dx+p\right)
\leq \\
 \overline H_L(p,\omega)+  \lambda +  C(\frac{1}{\gamma}+\kappa) ( \ep + (\frac{\ep}{\delta\kappa})^{\frac12}).
 \end{multline*}
The $1-$periodicity of 
 $w^{\ep,\gamma}$ yields $\ds  \int_{Q_1} Dw^{\ep,\gamma}=0$, therefore
\begin{multline*}
\left| \frac{1}{r^{d}} \int_{Q_r} Dw^{\ep,\gamma}\right|\leq \frac{1}{r^d} ( | \int_{Q_1}Dw^{\ep,\gamma} |+   \|Dw^{\ep,\gamma}\|_\infty  \left|Q_1\backslash Q_r|\right)\\ \leq C(1-r)= C(\eta_L+ \gamma +(\frac{\ep}{\delta\kappa})^{\frac12}),
\end{multline*}
with the last equality following from the choice of $r$. 
Hence
$$
\overline H\left(p\right)
\leq 
 \overline  H_L(p,\omega)+  \lambda + C ( \eta_L+\gamma+ \frac{\ep}{\gamma}+ \kappa\ep +(\frac{\ep}{\delta\kappa})^{\frac12}(\frac{1}{\gamma}+\kappa+1)).
$$
Choosing $\ds \kappa= (\ep\delta)^{-\frac13}\gamma^{-\frac23}$ 
and $\ds \gamma= \ep^{\frac14}\delta^{-\frac12}$,
and recalling that $\ep=L^{-1}$, we find 
\be\label{barHleqbarHL}
\overline H\left(p\right)
\leq 
 \overline  H_L(p,\omega)+  \lambda + C(\eta_L+ L^{-\frac14}\delta^{-\frac12}).
\ee
The claim now follows.\\
\end{proof}
\vskip.05in

\noindent We present next the
 
\begin{proof}[Proof of Lemma \ref{Lem:estimate}:]  Let $\bar x\in \text{Int} (Q_r)$ be a differentiability point of  $w^{\ep,\gamma}$. Recall that $w^{\ep,\gamma}$ is
Lipschitz continuous with Lipschitz constant $\bar L$ and, hence, a.e. differentiable.
Then there exists a unique $\bar y\in \R^d$ such that 
\be\label{ypointdemax}
y\to w^\ep(y,\omega)-\frac{1}{2\gamma}|y-\bar x|^2\; \mbox{\rm has a maximum at $\bar y$}
\ee
and
\be\label{calculDwepgamma}
Dw^{\ep,\gamma}(\bar x)= \frac{\bar y-\bar x}{\gamma} \  \text{ and }  \  |\bar y-\bar x|\leq \bar L \gamma.
\ee
Recall that $\kappa>0$ is fixed. For $\sigma>0$ small, consider the map $\Phi:\R^d\times\R^d\times \Omega\to \R$ 
$$
\Phi(y,z,\omega) := w^\ep(y,\omega)-\ep v^\delta \left( \frac z\ep , \omega\, ; \frac{\bar y-\bar x}{\gamma}+p\right) -\frac{|y-\bar x|^2}{2\gamma}
-\frac\kappa2 |y-\bar y|^2
-\frac{|y-z|^2}{2\sigma}
$$
and fix a maximum point $(y_\sigma,z_\sigma)$ of $\Phi$.
\vskip.05in

\noindent Note that $w^{\ep,\gamma}$ as well as all the special points chosen above depend on $\omega$. Since this plays no role in what follows, to keep the notation simple we omit this dependence.
\vskip.05in

\noindent Next we derive some estimates on $|y_\sigma-z_\sigma|$ and $|y_\sigma-\bar y|$.  The Lipschitz continuity of  $\ds v^\delta (\cdot , \omega\, ; \frac{\bar y-\bar x}{\gamma}+p)$ yields 
\be\label{estiysigmamoinszsigma}
|y_\sigma-z_\sigma|\leq C\sigma.
\ee 
Using this last observation as well as the fact that  $(y_\sigma,z_\sigma)$ is a maximum point of $\Phi$ and $\|v^\delta\|_\infty\leq C/\delta$, we get
$$
w^\ep(\bar y,\omega)-C\frac{\ep}{\delta} -\frac{|\bar y-\bar x|^2}{2\gamma} \leq \Phi(\bar y, \bar y) \leq \Phi(y_\sigma,z_\sigma) \leq w^\ep(y_\sigma,\omega)+C\frac{\ep}{\delta}-\frac{|y_\sigma-\bar x|^2}{2\gamma}-\frac\kappa2 |y_\sigma-\bar y|^2,
$$
while, in view of \eqref{ypointdemax}, we also have 
$$
w^\ep (y_\sigma, \omega) - \frac{| y_\sigma-\bar x |^2}{2\gamma} \leq  w^\ep (\bar y,\omega)-\frac{| \bar y-\bar x |^2}{2\gamma}.
$$
Putting together the above inequalities yields
\be\label{ysigmamoinsy}
|y_\sigma-\bar y|\leq C\left( \frac{\ep}{\delta\kappa}\right)^{\frac12}. 
\ee
In particular, \eqref{calculDwepgamma}, the choice of $r$  
and the fact that $\bar x\in \text {Int} (Q_r)$  imply that 
$$ \bar y\in \text{ Int} (Q_{1-2\eta_L-C\left(\frac{\ep}{\delta\kappa}\right)^{\frac12}}) \ \text{ and
$y_\sigma\in Q_{1-2\eta_L}$.}$$ 
\vskip.05in

\noindent  At this point, for the convenience of the reader, it is necessary to recall some basic notation and terminology  from the theory of viscosity solutions (see \cite{CIL}).
Given a viscosity upper semicontinuous (resp.  lower semicontinuous) sub-solution (resp. super-solution ) $u$ of $F(D^2u, Du, u, x)=0$ in some open subset $U$ of $ \R^d$, the lower-jet (resp. upper-jet)  $\overline {\mathcal J}^{2,+}u(x)$ (resp. $ \overline {\mathcal J}^{2,-}u(x)$) at some $x \in U$ consists of $(X,p)\in {\mathcal S}^d \times \R^d$ that can be used to evaluate the equation with the appropriate inequality. For example if $u$ is a sub-solution of $F(D^2u, Du, u, x)=0  \ \text{ in} \ U$ and $(X,p) \in \overline {\mathcal J}^{2,+}u(x)$, then $F(X,p,u(x),x)\leq 0$.
\vskip.05in

\noindent Now  we use the maximum principle for semicontinuous functions (see \cite{CIL}). Since $(y_\sigma,z_\sigma)$ is a maximum point of $\Phi$, for any $\eta>0$, there exist $Y_{\sigma,\eta}, Z_{\sigma,\eta} \in {\mathcal S}^d$ 
such that 
$$
(Y_{\sigma,\eta}, \frac{y_\sigma-\bar x}{\gamma}+\frac{y_\sigma-z_\sigma}{\sigma}+\kappa(y_\sigma-\bar y)) \in \overline {\mathcal J}^{2,+}w^\ep(y_\sigma,\omega),  \ \  ( Z_{\sigma,\eta}, \frac{y_\sigma-z_\sigma}{\sigma})\in \overline {\mathcal J}^{2,-}v^\delta(\frac{z_\sigma}{\ep},\omega),
$$
and 
\be\label{matineq}
\left(\begin{array}{cc}
Y_\sigma & 0\\
0 & \frac{1}{\ep} Z_{\sigma,\eta} \end{array}
\right)
\leq M_{\sigma,\eta}
+ \eta M_{\sigma,\eta}^2,
\ee
where
\be\label{defZsigma}
M_{\sigma,\eta}= \left(\begin{array}{cc} \frac{1}{\sigma} I_d & -\frac{1}{\sigma} I_d \\ -\frac{1}{\sigma} I_d & \frac{1}{\sigma} I_d\end{array}\right)
+ 
\left(\begin{array}{cc} (\frac{1}{\gamma} +\kappa)I_d & 0 \\ 0 & 0\end{array}\right).
\ee
Evaluating  the equations 
for $w^\ep$ and $v^\delta$ at $y_\sigma \in Q_{1-2\eta_L}$ and $z_\sigma \in \R^d$ respectively  we find
\begin{equation}\label{eqpourwep}
-\ep {\rm tr}(A(\frac{y_\sigma}{\ep},\omega)Y_{\sigma,\eta})
+H( \frac{y_\sigma-\bar x}{\gamma}+\frac{y_\sigma-z_\sigma}{\sigma}+\kappa(y_\sigma-\bar y)+p, \frac{y_\sigma}{\ep}, \omega)\leq \overline H_L(p,\omega)
\end{equation}
and 
\be\label{eqpourvdelta}
\delta v^\delta(\frac{z_\sigma}{\ep},\omega;\frac{\bar y-\bar x}{\gamma}+p)
-{\rm tr}\left(A(\frac{z_\sigma}{\ep},\omega\right)Z_{\sigma,\eta})+
H(\frac{y_\sigma-z_\sigma}{\sigma}+\frac{\bar y-\bar x}{\gamma}+p,\frac{z_\sigma}{\ep}, \omega) \geq 0.
\ee
Multiplying  \eqref{matineq} by the positive matrix
$$
\left(\begin{array}{c}
\Sigma\left(\frac{y_\sigma}{\ep},\omega\right) \\
\Sigma\left(\frac{z_\sigma}{\ep},\omega\right)\end{array}\right)
\left(\begin{array}{c}
\Sigma\left(\frac{y_\sigma}{\ep},\omega\right) \\
\Sigma\left(\frac{z_\sigma}{\ep},\omega\right)\end{array}\right)^T,
$$
and taking the trace,  in view of \eqref{defZsigma}, we obtain
\begin{multline*}
 {\rm tr}(A(\frac{y_\sigma}{\ep},\omega)Y_{\sigma,\eta})
 - \frac{1}{\ep} {\rm tr}(A(\frac{z_\sigma}{\ep},\omega)Z_{\sigma,\eta}) \\
 \leq \frac{1}{\sigma}\|\Sigma(\frac{y_\sigma}{\ep},\omega)-\Sigma (\frac{z_\sigma}{\ep},\omega) \|^2
 +(\frac{1}{\gamma}+\kappa) \|\Sigma\left(\frac{y_\sigma}{\ep},\omega\right)\|^2\\
 +\eta {\rm tr}\left (M_{\sigma,\eta}^2\left(\begin{array}{c}
\Sigma\left(\frac{y_\sigma}{\ep},\omega\right) \\
\Sigma\left(\frac{z_\sigma}{\ep},\omega\right)\end{array}\right)
\left(\begin{array}{c}
\Sigma\left(\frac{y_\sigma}{\ep},\omega\right) \\
\Sigma\left(\frac{z_\sigma}{\ep},\omega\right)\end{array}\right)^T
\right).
 \end{multline*}
Recalling that $\Sigma$ satisfies \eqref{Sx-Sy}  and using \eqref{estiysigmamoinszsigma}, we get  
 \begin{multline}\label{ineqpourtraces}
 {\rm tr}(A(\frac{y_\sigma}{\ep},\omega)X_{\sigma,\eta})
 - \frac{1}{\ep} {\rm tr}(A(\frac{z_\sigma}{\ep},\omega)Y_{\sigma,\eta}) \\
 \leq \frac{C}{\sigma\ep^2}|y_\sigma-z_\sigma|^2+C(\frac{1}{\gamma}+\kappa)
 + \eta C(\sigma)
 \leq C(\frac{\sigma}{\ep^2}+\frac{1}{\gamma}+\kappa)
 + \eta C(\sigma).
 \end{multline}
 \noindent  Note that the  $C(\sigma)$ actually depends also on all the other parameters of the problem but, and this is important,  is independent of $\eta$. 
\vskip0.05in

\noindent Next we use that $H$ satisfies \eqref{Hx-Hy}, \eqref{Hp-Hq}.  From  \eqref{estiysigmamoinszsigma} and \eqref{ysigmamoinsy} it follows 
\begin{multline}\label{estiHH}
H( \frac{y_\sigma-\bar x}{\gamma}+\frac{y_\sigma-z_\sigma}{\sigma}+\kappa(y_\sigma-\bar y)+p, \frac{y_\sigma}{\ep}, \omega)
-H(\frac{y_\sigma-z_\sigma}{\sigma}+\frac{\bar y-\bar x}{\gamma}+p,\frac{z_\sigma}{\ep}, \omega)
\geq \\
-C(  \frac{|y_\sigma-\bar y|}{\gamma} +\kappa|y_\sigma-\bar y|+ \frac{|y_\sigma-z_\sigma|}{\ep}) \geq 
-C((\frac{\ep}{\delta\kappa})^{\frac12}(\frac{1}{\gamma}+\kappa)+\frac{\sigma}{\ep}).
 \end{multline}
\vskip.05in

\noindent We estimate the  
difference between \eqref{eqpourwep} and \eqref{eqpourvdelta}, using  \eqref{ineqpourtraces} and \eqref{estiHH}, to find 
$$
-\delta v^\delta\left(\frac{z_\sigma}{\ep},\omega;\frac{\bar y-\bar x}{\gamma}+p\right) \leq 
 \overline H_L(p,\omega)+ C( \frac{\sigma}{\ep}+\frac{\ep}{\gamma}+ \kappa\ep +(\frac{\ep}{\delta\kappa})^{\frac12}(\frac{1}{\gamma}+\kappa)+\frac{\sigma}{\ep})+ \eta \ep C(\sigma).
$$
Finally the choice of $\omega$ (recall  \eqref{hypopoomega}) implies
$$
\overline H(\frac{\bar y-\bar x}{\gamma}+p)
\leq 
 \overline H_L(p,\omega)+ \lambda + C( \frac{\sigma}{\ep}+\frac{\ep}{\gamma}+ \kappa\ep +(\frac{\ep}{\delta\kappa})^{\frac12}(\frac{1}{\gamma}+\kappa)+\frac{\sigma}{\ep})+ \eta \ep C(\sigma).
$$
We now let $\eta\to0$ and then $\sigma \to 0$ to obtain
$$
\overline  H (\frac{\bar y-\bar x}{\gamma}+p)
\leq 
 \bar H_L(p,\omega)+  \lambda + C( \frac{\ep}{\gamma}+ \kappa\ep +(\frac{\ep}{\delta\kappa})^{\frac12}(\frac{1}{\gamma}+\kappa)).
$$
Recalling \eqref{calculDwepgamma}, we may now conclude  \eqref{eqwepgamma} holds. 
\end{proof}

\subsection*{The full estimate}
Combining  Proposition \ref{propupperbound} and Lemma \ref{Lem:owerBound} yields  the full estimate. 

\begin{proof}[Proof of Theorem \ref{thm:Cvrate}] Recall   
that $\eta_L=L^{-\frac{\bar a}{4(\bar a+1)}}$ and choose $\delta= L^{-\frac{1}{2(\bar a+1)}}$ and $\lambda = \delta^{\bar a}=L^{-\frac{\bar a}{2(\bar a+1)}}$. Then Proposition \ref{propupperbound} implies that, for $L\geq 1$,  
$$
\begin{array}{l}
\ds \Prob[ \sup_{p \in B_K} (\overline  H\left(p\right)-
 \overline H_L(p,\cdot))>  CL^{-\frac{\bar a}{4(\bar a+1)}}] \\
\qquad \qquad \ds \leq \Prob[ \inf_{(z,p')\in Q_{L} \times B_C} ( -\delta v^\delta(z,\cdot; p')-\overline H(p')) < -\delta^{\bar a}].
\end{array}
$$
Then  \eqref{HypRateHJ} gives 
$$
\Prob[ \sup_{p\in B_K} (\overline H(p)-
 \overline H_L(p,\cdot))>  CL^{-\frac{\bar a}{4(\bar a+1)}}]  \leq \Cr{ratevdelta}^{ C, 2({\bar a}+1)}(L^{-\frac{1}{2(\bar a+1)}}).
 $$
Similarly Lemma \ref{Lem:owerBound} implies, for $L\geq 1$, that 
 $$
\begin{array}{l}
\ds \{ \omega\in \Omega :  \sup_{p \in B_K} (\overline H_L(p,\omega)- \overline H(p))>   \frac{C\lambda }{\eta_L } 
\} \\
\qquad \qquad \ds \subset \{ \omega\in \Omega:   \sup_{(y,p) \in Q_{L}\times B_K} | \delta v^\delta (y,\omega\, ; p) + \overline H(p)| >\lambda \}.
\end{array}
$$
Since $\lambda/\eta_L = L^{-\frac{\bar a}{4(\bar a+1)}}$, using \eqref{HypRateHJ} we get
$$
\begin{array}{l}
\ds \Prob[  \sup_{p \in B_K} (\overline H_L(p,\cdot)- \overline H(p))> CL^{-\frac{\bar a}{4(\bar a+1)}}] \\
\qquad \qquad \ds \leq\Prob[   \sup_{(y,p) \in Q_{L}\times B_K} | \delta v^\delta (y,\cdot\, ; p) + \overline H(p)| >\delta^{\bar a}]\leq  \Cr{ratevdelta}^{ C, 2({\bar a}+1)}(L^{-\frac{1}{2(\bar a+1)}}).
\end{array}
$$
Combining both estimates gives the result. 
\end{proof}

\section{ Approximations fully nonlinear uniformly elliptic equations}\label{sec:unifell}

\noindent We introduce the hypotheses and state and prove the approximation result.
We remark that our arguments extend to nonlinear elliptic equations which include gradient dependence at the expense of some additional technicalities.

\subsection*{The hypotheses} The 
map $F:{\mathcal  S}^d \times \R^d \times \Omega\to \R$ is $ \mathcal B(\mathcal S^d)\otimes  \mathcal B\otimes \mathcal F$ measurable and stationary, that is,  for all $X\in {\mathcal S}^d$, $x,y\in \R^d$ and $\omega\in \Omega$, 
\be\label{Fstationary}
F(X, y,\tau_z \omega)= F(X, y+z,\omega).
\ee
We continue with the structural hypotheses on $F$. We assume that it is uniformly elliptic uniformly in $\omega$, that is there exist constants 
$0<\overline \lambda<\overline \Lambda$ such that, for all $X,Y \in {\mathcal S}^d$ with $Y\geq 0$, $x \in \R^d$ and  $\omega \in \Omega$,
\be\label{Felliptic}
-\overline \Lambda \|Y\| \leq F(X+Y,x,\omega)-F(X,x,\omega)\leq -\overline \lambda \|Y\|,
\ee
\vskip 0.05in
\noindent and bounded, that is there exists 
$\bar C>0$ such that 
\be\label{Fbounded}
\sup_{\omega\in \Omega} |F(0,0,\omega)| \leq \bar C.
\ee
\vskip.05in

\noindent Note that, in view of \eqref{Fstationary} and \eqref{Felliptic}, for each $R>0$, there exists $C=C(R,\bar \lambda, \bar \Lambda, \bar C)>0$ such that
\be\label{Fbbounded}
\sup_{\|X\|\leq R,\ y\in \R^d,\ \omega \in \Omega} |F(X,y,\omega)| \leq C. 
\ee
\noindent The required regularity on $F$ is that there exists $\rho:[0,\infty) \to [0,\infty)$ such that $\lim_{r\to 0} \rho(r) =0$ and, for all $x,y\in \R^d$, $\sigma>0$, $P,X,Y\in {\mathcal S}^d$ such that 
$$
\left(\begin{array}{cc} X& 0\\0& -Y\end{array}\right) \leq \frac{3}{\sigma}\left(\begin{array}{cc} I_d& -I_d\\-I_d& I_d\end{array}\right),
$$
\be\label{Fcontinuous}
F(X+P,x,\omega)-F(Y+P,y,\omega)\leq \rho (\frac{|x-y|^2}{\sigma}+|x-y|).
\ee
To simplify the statements, we write
\be\label{allF}
\eqref{Fstationary}, \eqref{Felliptic}, \eqref{Fbounded}  \ \text{ and} \ \eqref{Fcontinuous} \  \text{hold}.
\ee
\vskip.05in

\subsection*{The periodic approximation} 
Let $\zeta_\eta:\R^d\to[0,1]$ be a smooth, $1-$ periodic  satisfying \eqref{zeta},
choose $\eta_L\to0$ and, to simplify the notation, write 
$
\zeta_L(x):=\zeta_{\eta_L}(\frac{x}{L}).
$
\vskip.05in

\noindent For $(X,x,\omega)\in \mathcal S^d\times Q_L\times \Omega$ we set
$$
F_L(X,x,\omega)= (1-\zeta_L(x)) F(X,x,\omega)+ \zeta_L(x) F_0(X),
$$
where $F_0\in C({\mathcal S}^d)$ is space independent and uniformly elliptic map with the same ellipticity constants as $F$.  Then we extend $F_L$ to be an  $L-$ periodic  map in $x$, i.e., for all $(X,x,\omega) \in {\mathcal S }^d \times \R^d \times \Omega $  and all $ \xi\in \Z^d,$
$$
F_L(X,x+L\xi,\omega)= F_L(X,x,\omega). 
$$
Note that, in view of the choice of $F_0$, 
\be\label{FL}
F_L \ \text{satisfies} \  \eqref{allF}.
\ee

\subsection*{The approximation result} 
Let $\overline F=\overline F(\cdot)$ and $\overline F_L=\overline F_L(\cdot, \omega)$ be the averaged nonlinearities (ergodic constants) that  correspond to the homogenization problem for $F$ and $F_L(\cdot,\omega)$. We claim that, as $L\to \infty$,  $\overline F_L(\cdot,\omega)$ is an a.s. good approximation of $\overline F$. 

\begin{thm}\label{thm:cvunifell} Assume  \eqref{ergodic} and \eqref{allF}. For any  $P\in{\mathcal S}(\R^d)$ and a.s. in $\omega$,  
\be\label{IneqAProuver}
\lim_{L\to+\infty}  \overline F_L(P, \omega) = \overline F(P).
\ee
 \end{thm}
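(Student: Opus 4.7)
The plan is to use a single compactness argument that simultaneously yields both bounds, in contrast to the separate upper and lower bound proofs of the HJB case in Theorem \ref{th:mainCV}. I rescale the $L$-periodic corrector of $F_L$ to a macroscopic scale, extract a subsequential limit via uniform Krylov--Safonov estimates, identify the limit equation through the almost sure homogenization of $F$ (CSW), and conclude by invoking uniqueness of the ergodic constant for the constant-coefficient nonlinearity $\overline F$.

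Concretely, fix $P\in\mathcal S^d$ and $\omega$ in the full-measure event on which the CSW stochastic homogenization of $F$ holds. Let $\chi_L(\cdot,\omega)$ be the $L$-periodic corrector for $F_L$, i.e., a bounded $L$-periodic solution of
\[
F_L(D^2\chi_L+P,x,\omega)=\overline F_L(P,\omega) \quad \text{in } \R^d,
\]
normalized so that $\min_{Q_L}\chi_L=0$. Set $\ep:=1/L$ and
\[
u^\ep(x):=\tfrac{1}{2}\langle Px,x\rangle+\ep^2\chi_L(x/\ep,\omega).
\]
Then $u^\ep-\tfrac{1}{2}\langle Px,x\rangle$ is $1$-periodic, and on the interior region $Q_{1-2\eta_L}$, where $\zeta_L=0$ and hence $F_L=F$,
\[
F(D^2u^\ep,x/\ep,\omega)=\overline F_L(P,\omega).
\]
The uniform ellipticity of $F_L$ with $L$-independent constants \eqref{FL}, combined with the bound $|\overline F_L(P,\omega)|\le C$ coming from \eqref{Fbbounded} applied to $F_L$, produces via an ABP-based quadratic barrier argument the growth estimate $\|\chi_L\|_{L^\infty(Q_L)}\le CL^2$, with $C$ independent of $L$ and $\omega$. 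Consequently $\|u^\ep\|_{L^\infty(Q_1)}\le C$, and the Krylov--Safonov $C^\alpha$ estimate (whose constants depend only on ellipticity) gives uniform $C^\alpha$ control of $u^\ep$ on each compact subset of $Q_1$.

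Take an arbitrary subsequence $L_n\to\infty$ and extract further subsequences so that $\overline F_{L_n}(P,\omega)\to c^*\in\R$ and, by Arzel\`a--Ascoli, $u^{\ep_n}\to u$ locally uniformly in $Q_1$. The CSW homogenization, applied with the spatially constant right-hand sides $c_n:=\overline F_{L_n}(P,\omega)\to c^*$, transfers the equation to the limit: $\overline F(D^2u)=c^*$ in $\mathrm{Int}(Q_1)$. Since $\phi:=u-\tfrac{1}{2}\langle Px,x\rangle$ is the uniform limit on $\R^d$ of $1$-periodic continuous functions, $\phi$ is bounded and $1$-periodic; moreover, since $F_L$ is $L$-periodic and $F_L=F$ on $Q_{L(1-2\eta_L)}+L\Z^d$, the equation $\overline F(D^2u)=c^*$ extends by $\Z^d$-periodicity to all of $\R^d$. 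In particular, $\phi$ attains its minimum at some $x_0$, at which the affine-quadratic function $\psi(x):=\tfrac{1}{2}\langle Px,x\rangle+\phi(x_0)$ satisfies $\psi\le u$ globally with equality at $x_0$; $\psi$ is thus a smooth subtangent to the supersolution $u$ at $x_0$, and
\[
c^*\le \overline F(D^2\psi(x_0))=\overline F(P).
\]
Testing symmetrically at the maximum of $\phi$ from above with the analogous quadratic yields $c^*\ge\overline F(P)$, so $c^*=\overline F(P)$. Since the subsequence was arbitrary, $\overline F_L(P,\omega)\to\overline F(P)$ almost surely, which is \eqref{IneqAProuver}.

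The principal technical obstacle is the uniform quadratic growth bound $\|\chi_L\|_{L^\infty(Q_L)}\le CL^2$ in the second paragraph. Here the construction of the periodic approximation enters critically: $F_L$ must inherit the ellipticity constants of $F$ (through the choice of $F_0$ with matching ellipticity) in order for the ABP constant to be $L$-independent, so that quadratic barriers of size $O(L^2)$ suffice to control $\chi_L$. Without this bound, the rescaled family $\ep^2\chi_L(\cdot/\ep)$ would not be uniformly bounded on $Q_1$ and the compactness step would fail; a secondary subtlety is the justification that the limit equation genuinely holds on all of $\R^d$, which is where the specific periodicity structure of $F_L$ (rather than the stationarity of $F$) is used.
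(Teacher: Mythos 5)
Your overall scheme (rescale the corrector, get uniform H\"older bounds, pass to a limit along a subsequence, identify the limit equation via the CSW homogenization, then test at an extremum of the periodic part) is the same as the paper's. The place where you go wrong is the step you dismiss as a ``secondary subtlety'': you assert that ``the equation $\overline F(D^2u)=c^*$ extends by $\Z^d$-periodicity to all of $\R^d$,'' but this is false. Since $F_L$ agrees with $F$ only on $\bigcup_{z\in L\Z^d}\bigl(Q_{L(1-2\eta_L)}+z\bigr)$, the rescaled equation and hence the CSW limit are established only on $\bigcup_{z\in\Z^d}\operatorname{Int}\bigl(Q_1(z)\bigr)$. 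Periodicity of the limit $u$ therefore yields $\overline F(D^2u)=c^*$ on $\R^d\setminus B$, where $B:=\bigcup_{z\in\Z^d}\partial Q_1(z)$ is a null set on which no equation is known. A viscosity solution of an equation off a measure-zero set need not satisfy it on that set, so nothing you have said rules out that the minimum (or maximum) of $\phi=u-\tfrac12\langle Px,x\rangle$ sits on $B$, in which case the smooth subtangent $\psi$ cannot be used as a test function and your conclusion $c^*\le\overline F(P)$ does not follow.

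This is precisely the difficulty the paper addresses, and closing it is the real content of the proof. The paper perturbs the minimum by a small quadratic $\sigma|x-\bar x|^2$, forms the convex envelope $\Gamma(w)$ of $w=v+\sigma|x-\bar x|^2$ (with $v$ a supersolution of a Pucci inequality, so $\Gamma(w)\in C^{1,1}$), and uses the ABP measure estimate $|B_{\sigma/4}|\le C|E|$ for the contact set $E$. Since $B$ has Lebesgue measure zero and $|E|>0$, some contact point $x\in E\setminus B$ exists; at such $x$ the equation \emph{is} known to hold and the quadratic $\Gamma(w)(x)+\langle D\Gamma(w)(x),\cdot-x\rangle-\sigma|\cdot-\bar x|^2$ is a valid subtangent, giving $\overline F(-2\sigma I_d+P)\ge c^*$, and $\sigma\to0$ finishes. (There is also a separate proposition in the paper isolating exactly this removability-of-null-sets phenomenon for $S^*$ functions.) Your proof needs this ABP step or an equivalent argument; as written it is incomplete. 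The rest of your argument---uniform $L^\infty$ bound via ABP barriers, Krylov--Safonov for $C^\alpha$ compactness, and passage through CSW with constant right-hand sides $c_n\to c^*$---is correct and matches the paper (with the minor cosmetic difference that you carry the quadratic $\tfrac12\langle Px,x\rangle$ in $u^\ep$ rather than in the equation).
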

 
\begin{proof} Fix $P\in {\mathcal S}^d$ with $\|P\|\leq K$, $\omega \in \Omega$ for which the homogenization holds and let $\chi_L$ be a $L-$periodic corrector for $F_L(P,\omega)$, that is a continuous solution to  
$$
F_L(D^2\chi_L+P,x, \omega)= \overline F_L(P, \omega) \ {\rm in } \  \R^d. 
$$
Without loss of generality we assume that $\chi_L(0)=0$. 

\noindent The rescaled function $v_L(x):= L^{-2} \chi_L(Lx)$ is $1-$periodic and solves 
$$
F_L(D^2v_L+P,Lx, \omega)= \overline F_L(P, \omega) \ {\rm in } \ \R^d.
$$
Lemma \ref{lem.pillage} below (its proof is presented after the end of the ongoing one) implies the existence of an  $\alpha \in (0,1)$ and $C>0$ 
depending only $\bar \lambda, \bar \Lambda$, $d$ and $\overline C$ in \eqref{Fbounded} so that 
\be\label{boundsvL}
{\rm osc}(v_L)\leq C \ {\rm and } \  [v_L]_{0,\alpha} \leq C.
\ee
Note that, by the definition of $F_L(P,y,\omega)$, 
$$
F(D^2v_L+P,Lx, \omega)= \overline F_L(P) \  {\rm in } \  Q_{1-2\eta_L},
$$
while, since $F_L$ is uniformly elliptic, 
$$
-M^+(D^2v_L+P)\leq F(P,Lx,\omega)-\overline F_L(P) \ \text{ in } \R^d 
$$
and
$$ 
-M^-(D^2v_L+P)\geq F(P,Lx,\omega)-\overline F_L(P) \ \text{ in } \R^d , 
$$ 
where $M^+$ and $M^-$ are the classical Pucci extremal operators associated with the
uniform ellipticity constants $\bar \lambda/d$ and $\bar \Lambda$ ---see  \cite{CC} for the exact definitions.
\vskip0.05in

\noindent Fix a sequence $(L_n)_{n\in \N} $ such that $\overline F_{L_n}(P, \omega)\to \limsup_{L\to +\infty} \overline F_L(P,\omega)$. 
Using \eqref{boundsvL} we find a further subsequence (still denoted in the same way) and an $1-$periodic $v\in C^{0,\alpha}(\R^d)$ such that the $v_{L_n}$'s converge uniformly to $v$. In view of the results of \cite{CSW} about  stochastic homogenization, $v$ solves 
\be\label{homo1}
\overline F(D^2v+P) = \limsup_{L\to +\infty} \overline F_L(P,\omega) \ {\rm in } \ {\rm Int}(Q_{1}),
\ee
while the stability of solutions also gives, for $C_P= \sup_{x \in \R^d}  |F(P,x,\omega)|+\limsup_{L\to+\infty}|\overline F_L(P)|$, 
$$
-M^+(D^2 v)\leq C_P \ \text{ and } \  -M^-(D^2v)\geq -C_P \  {\rm in } \ R^d.
$$ 
Define  ${B} := \cup_{z\in \Z^d} \partial Q_1(z)$. In view of \eqref{homo1}, the periodicity of $v$ implies that  
\be\label{ineqinB}
\overline F(D^2v+P) =  \limsup_{L\to +\infty} \overline F_L(P,\omega) \qquad {\rm in }\; \R^d\backslash {B}.
\ee
\vskip.05in

\noindent Let $\bar x$ be a minimum point of $v$ and, for $\sigma\in (0,1)$ fixed, consider the map $w(x):= v(x)+\sigma |x-\bar x|^2$ and and its convex hull $\Gamma(w)$. Since $w$ is a subsolution to 
$$
-M^+(D^2w) = -M^+(D^2 v + 2\sigma I_d) \leq C'_P := C_P + 2 \sigma \Lambda, 
$$
it follows (see  \cite{CC})  that $\Gamma(w)$ is $C^{1,1}$ with $\|D^2 \Gamma(w) \|_\infty \leq C$. 
\vskip0.05in

\noindent Let $E$ be the contact set between $w$ and its convex envelope, i.e., 
$$
E:=\{x \in \text{ Int}(B_{1/4}): \ w(x)=\Gamma(w)(x)\}.
$$
Note, and this is a step in the proof of the ABP-estimate (see \cite{CC}) that, if $p\in B_{\sigma/4}$, then there exists  $x\in E$ such that $D\Gamma(w)(x)=p$. Indeed, if $y\notin B_{1/4}(\bar x)$ and $p\in B_{\sigma/4}$, then 
$$
\begin{array}{rl}
\ds w(y)-\lg p,y\rg  \; \geq & \ds v(y)+\sigma |y-\bar x|^2-\lg p,\bar x\rg-| p||y-\bar x|
\\ [2mm]  
> &  v(\bar x)-\lg p,\bar x \rg+ |y-\bar x|(\sigma/4 - |p|) \;
\geq \; \ds  
w(\bar x)-\lg p,\bar x\rg .
\end{array}
$$
Hence any maximum point $x$ of $w-\lg p,\cdot\rg$ must belong to $B_{1/4}(\bar x)$. Then, since  
$w(y)\geq w(x)+\lg p,y-x\rg$ for any $y\in \R^d$,  it follows  that 
$w(x)=\Gamma(w)(x)$ and $D\Gamma(w)(x)=p$. As a consequence we have
$$
|B_{\sigma/4}|\leq |D\Gamma(w)(E)| \leq \int_{E} \det(D^2\Gamma(w))\leq C |E|.
$$
Since  ${B}$ has zero measure, the above estimate shows that there exists $x \in E \backslash B$ such that $w(x)=\Gamma(w)(x)$. Then, for any $y\in \R^d$,  
$$
v(y)\geq \Gamma(w)(y)-\sigma |y-\bar x|^2 \geq w(x) +\lg D\Gamma(w)(x),y-x\rg -\sigma |y-\bar x|^2,
$$
with an equality at $y=x$. 
\vskip0.05in

\noindent Using  
$
\phi(y):= w(x) +\lg D\Gamma(w)(x),y-x\rg-\sigma |y-\bar x|^2
$ as a test function in 
\eqref{ineqinB} and the fact that $x\notin {B}$,  we get
$$
\overline F(-2\sigma I_d+P) \geq \limsup_{L\to +\infty} \overline F_L(P,\omega) . 
$$
Letting $\sigma\to 0$ gives one side of the equality \eqref{IneqAProuver}. 
The proof of the reverse one follows in a symmetrical way.  
\end{proof} 

\noindent To complete the proof, it remains to explain \eqref{boundsvL}. 
For this we note that $v_L$ is $1-$periodic and, in view of \eqref{Fbbounded}, belongs to the class $S^*(\overline \lambda/d, \overline \Lambda, C_0)$, where $C_0= \sup_x |F(P,x,\omega)|$ (see  \cite{CC} for the definition of $S^*(\overline \lambda/d, \overline \Lambda, C_0)$). Then  \eqref{boundsvL} is a consequence of the classical Krylov-Safonov result about the continuity of solutions of uniformly elliptic pde. Since we do not know an exact reference for \eqref{boundsvL}, we present below its proof.

\begin{lem}\label{lem.pillage} Let $0<\lambda<\Lambda$ and $C_0>0$ be constants. There exist $C=C(d, \lambda, \Lambda, d, C_0)>0$ and $\alpha=\alpha (d, \lambda, \Lambda, d, C_0) \in (0,1]$ such that, any $1-$periodic $u \in S^*(\lambda, \Lambda, C_0)$ satisfies
${\rm osc}(u)\leq C$ and $[u]_{C^{0,\alpha}}\leq C$. 
\end{lem}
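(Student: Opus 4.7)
The plan is to exploit the compactness provided by $1$-periodicity to upgrade the interior Krylov--Safonov Hölder estimate for the Pucci class into a global bound. The key input is the standard Krylov--Safonov estimate (see Caffarelli--Cabré): there exist $\alpha = \alpha(d,\lambda,\Lambda)\in(0,1]$ and $C_{KS}=C_{KS}(d,\lambda,\Lambda)$ such that any $u\in S^*(\lambda,\Lambda,C_0)$ in a ball $B_R$ satisfies
\[
[u]_{C^{0,\alpha}(B_{R/2})}\;\leq\; \frac{C_{KS}}{R^{\alpha}}\bigl(\osc_{B_R}u+C_0 R^{2}\bigr).
\]
Since $u$ is $1$-periodic on $\R^d$, it lies in $S^*$ on every ball and $\osc_{B_R}u\leq \osc u$, so this estimate is available for every $R>0$.

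First I would bound the oscillation. By continuity and periodicity, choose $x_{0},y_{0}\in \overline{Q_{1}}$ with $u(x_{0})=\sup u$ and $u(y_{0})=\inf u$; after translating by integer vectors we may assume $x_{0},y_{0}\in B_{\sqrt{d}}$. Then, for any $R\geq 4\sqrt{d}$, both points lie in $B_{R/2}$, so applying the Krylov--Safonov estimate yields
\[
\osc u \;=\; u(x_{0})-u(y_{0}) \;\leq\; (2\sqrt{d})^{\alpha}\,[u]_{C^{0,\alpha}(B_{R/2})}\;\leq\; \frac{C_{KS}(2\sqrt{d})^{\alpha}}{R^{\alpha}}\bigl(\osc u + C_{0}R^{2}\bigr).
\]
Fixing $R=R_0(d,\lambda,\Lambda)$ large enough that the prefactor multiplying $\osc u$ is at most $1/2$, we can absorb $\osc u$ into the left-hand side and conclude
\[
\osc u \;\leq\; 2C_{KS}(2\sqrt{d})^{\alpha}\,C_{0}\,R_{0}^{2-\alpha}\;=:\;C_{1}(d,\lambda,\Lambda,C_{0}).
\]

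With the oscillation under control, I would deduce the Hölder bound by applying the Krylov--Safonov estimate one more time with the radius $R_0$ fixed above, giving
\[
[u]_{C^{0,\alpha}(B_{R_{0}/2})}\;\leq\;\frac{C_{KS}}{R_{0}^{\alpha}}\bigl(C_{1}+C_{0}R_{0}^{2}\bigr)\;=:\;C_{2}.
\]
To pass from $B_{R_{0}/2}$ to $\R^d$, take any $x,y\in\R^d$. If $|x-y|\leq 1$, translate both by the same integer vector so that $x$ lands in $Q_1\subset B_{R_{0}/2}$; by periodicity $u$ is unchanged and the translated $y$ still lies in $B_{R_{0}/2}$ (for $R_0$ chosen $\geq 2(\sqrt{d}+1)$), so $|u(x)-u(y)|\leq C_{2}|x-y|^{\alpha}$. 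If $|x-y|>1$, then $|u(x)-u(y)|\leq \osc u\leq C_{1}\leq C_{1}|x-y|^{\alpha}$. Taking $C=\max(C_{1},C_{2})$ completes the proof.

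There is no serious obstacle: the only mild care needed is in choosing $R_0$ so that the Krylov--Safonov prefactor can be absorbed, and in verifying that all constants depend only on $d,\lambda,\Lambda,C_{0}$---which is immediate since $R_0$ is a function of $d,\lambda,\Lambda$ alone. The whole argument is a textbook use of Harnack/Hölder interior estimates combined with the translation invariance afforded by periodicity.
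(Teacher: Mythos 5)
Your proof is correct and is essentially the same as the paper's: both use the scaling invariance of the Pucci class $S^*$ together with the interior Krylov--Safonov H\"older estimate on balls of large radius (the paper phrases this by rescaling $u_M(x)=M^{-2}u(Mx)$, you by applying the estimate on $B_R$ directly) and then exploit periodicity to absorb the unknown term on the right-hand side by choosing the scale large. The only cosmetic difference is the order: you bound $\osc u$ first and then the seminorm, while the paper bounds $[u]_{0,\alpha}$ first and derives the oscillation bound from it.
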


\begin{proof} Without loss of generality, we assume that $u(0)=0$. For $M\geq 1$, let $u_M(x):=M^{-2}u(Mx)$. Note that $u_M$ is $M^{-1}-$periodic and still belongs to  $S^*(\lambda, \Lambda, d, C_0)$. The Krylov-Safonov result   yields $C=C(\lambda, \Lambda, d, C_0)>0$ and $\alpha=\alpha (d, \lambda, \Lambda, d, C_0) \in (0,1]$ with
$$
[u_M]_{0,\alpha ;Q_1} \leq C( \|u_M\|_{Q_2}
+1).
$$
It follows from the $1-$periodicity of $u$ and $u(0)=0$ that 
$$
[u_M]_{0,\alpha ;Q_1}\geq M^{\alpha-2}[u]_{0,\alpha} \  {\rm and} \  \|u_M\|_{Q_2}= M^{-2}\|u\| \leq M^{-2}d^{\frac12}[u]_{0,\alpha}.$$ 
\noindent Hence,
$$[u]_{0,\alpha} \leq CM^{2-\alpha}( M^{-2}d^{\frac12}[u]_{0,\alpha}+1).
$$
Choosing $M$ so that $CM^{-\alpha}d^{\frac12}=\frac12$ gives a bound on $[u]_{C^{0,\alpha}}$, from which we derive a $\sup$ bound on $u$.  
\end{proof}

\noindent We remark that the proof of Theorem \ref{thm:cvunifell} shows the following fact, which we state as a separate proposition, since it may be of independent interest. 

\begin{prop} Let $\Sigma \subset \R^d$ be a set of zero measure and assume that $u \in C(\R^d)$  
is a viscosity solution of the  uniformly elliptic equation
$F(D^2 u, x)= 0 \  {\rm in } \ \R^d\backslash \Sigma.$
If, in addition, $u \in S^*(\lambda,\Lambda, d, C)$ in $\R^d$, for some $0<\lambda< \Lambda$, then  $u$ is a viscosity solution of $F(D^2u, x)= 0$ in $\R^d$. 
\end{prop}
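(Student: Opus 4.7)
The plan is to abstract the convex-envelope / ABP argument already used inside the proof of Theorem \ref{thm:cvunifell}, which handles exactly this phenomenon in the concrete case $\Sigma=\bigcup_{z\in\Z^d}\partial Q_1(z)$. By the symmetry of the $S^*$ class under $u\mapsto -u$ it suffices to verify the viscosity subsolution property at each $x_0\in\Sigma$. So I fix $\phi\in C^2(\R^d)$ for which $u-\phi$ has a strict local max at $x_0$ with $u(x_0)=\phi(x_0)$, set $\bar v:=\phi-u$ (which has a strict local min at $x_0$, vanishing there, and lies in $S^*(\lambda,\Lambda,d,C')$ on a ball $B(x_0,r)$ with $C'$ depending on $C$ and $\|D^2\phi\|_\infty$), and aim to show $F(D^2\phi(x_0),x_0)\leq 0$.

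For $R\in(0,r)$ and $\sigma\in(0,1)$, I would introduce the regularization $w(y):=\bar v(y)+\sigma|y-x_0|^2$. This $w$ still has a strict min at $x_0$ with $w(x_0)=0$, is bounded below on $\partial B(x_0,R)$ by $\mu:=\min_{\partial B(x_0,R)}\bar v>0$ (which does not depend on $\sigma$), and belongs to $S^*(\lambda,\Lambda,d,C'')$ with $C''$ bounded uniformly in $\sigma\leq 1$. I then mimic the contact-set step of Theorem \ref{thm:cvunifell}: the convex envelope $\Gamma(w)$ of $w$ on $B(x_0,R)$ is $C^{1,1}$ with $\|D^2\Gamma(w)\|_\infty\leq K$ by the Pucci bound, and testing $w(\cdot)-\langle p,\cdot-x_0\rangle$ for $|p|<\mu/(2R)$ forces the minimizer in $\overline{B(x_0,R)}$ to lie in the interior; hence $D\Gamma(w)(E)\supseteq B_{\mu/(2R)}$, where $E:=\{w=\Gamma(w)\}$, and the area formula delivers a lower bound $|E|\geq c>0$ that is independent of $\sigma$.

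Since $|\Sigma|=0$, I can then select $x^*\in E\setminus\Sigma$. The supporting hyperplane of the convex function $\Gamma(w)$ at $x^*$, together with $\Gamma(w)(x^*)=w(x^*)$ and the definition of $w$, gives the pointwise lower bound $\bar v(y)\geq w(x^*)+\langle p^*,y-x^*\rangle-\sigma|y-x_0|^2$ with equality at $x^*$, where $p^*:=D\Gamma(w)(x^*)$. Consequently the $C^2$ function
$$
\Psi(y):=\phi(y)-w(x^*)-\langle p^*,y-x^*\rangle+\sigma|y-x_0|^2
$$
touches $u$ from above at $x^*$ and satisfies $D^2\Psi(x^*)=D^2\phi(x^*)+2\sigma I$. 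Because $x^*\notin\Sigma$, the subsolution hypothesis on $u$ yields $F(D^2\phi(x^*)+2\sigma I,\,x^*)\leq 0$. I close with a double limit: for $R$ fixed, along a subsequence $\sigma\to 0$ I have $x^*\to x_R\in\overline{B(x_0,R)}$, and continuity of $F$ from \eqref{Fcontinuous} together with continuity of $D^2\phi$ gives $F(D^2\phi(x_R),x_R)\leq 0$; then $R\to 0$ forces $x_R\to x_0$ and one last application of continuity produces $F(D^2\phi(x_0),x_0)\leq 0$.

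The main obstacle I anticipate is making sure that the regularity and measure estimates are truly uniform in $\sigma$, in particular that the lower bound $|E|\geq c>0$ and the Pucci-regularity bound $\|D^2\Gamma(w)\|_\infty\leq K$ do not degenerate as $\sigma\to 0$. This is arranged by taking the strict-min gap $\mu$ from $\bar v$ alone (not from $w$, since the added quadratic vanishes at $x_0$), and by observing that the Pucci constant for $w$ is bounded by $C'+2d\Lambda\sigma\leq C'+2d\Lambda$. The supersolution property is then obtained by the mirror argument applied to $-u$, which is legitimate because the $S^*$ class and the continuity hypothesis \eqref{Fcontinuous} are preserved under $u\mapsto -u$ (up to replacing $F(M,x)$ by $-F(-M,x)$, which satisfies the same structure).
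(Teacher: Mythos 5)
Your proposal is correct and is, in substance, the same argument the paper has in mind: the paper proves the proposition precisely by the ABP/convex-envelope/contact-set device embedded in the proof of Theorem \ref{thm:cvunifell} (build the function $w$, take its convex envelope $\Gamma(w)$, use the $S^*$-supersolution property to bound $\det D^2\Gamma(w)$ on the contact set $E$, conclude $|E|>0$, and then pick a contact point in $E\setminus\Sigma$ at which the equation can be tested with the supporting paraboloid). What you have added are exactly the routine modifications needed to go from the paper's special case (a periodic $v$, the effective equation $\overline F(D^2v+P)=\mathrm{const}$, and $\Sigma=\bigcup_z\partial Q_1(z)$) to the general statement: you replace $v$ by $\bar v=\phi-u$ for an arbitrary $C^2$ test function $\phi$ touching $u$ from above at an arbitrary $x_0\in\Sigma$, localize the convex envelope to $B(x_0,R)$ using the strict-minimum gap $\mu$ to guarantee interior contact points, and then pass to the double limit $\sigma\to 0$, $R\to 0$, using that $x^*\in\overline{B(x_0,R)}$ and the joint continuity of $F$ guaranteed by \eqref{Fcontinuous}. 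The supersolution half by the $u\mapsto -u$ symmetry is also the intended reduction.
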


\section{The convergence rate for nonlinear elliptic equations}\label{sec:unifellerror}

\noindent Here we show that it is possible to quantify the rate of convergence of $\overline F_L(\cdot,\omega)$ to $\overline F$.  As in the viscous HJB problem, we assume that we know a rate for the convergence of the solution to the approximate cell problem to the ergodic constant $\overline F$. 
\vskip.05in

\noindent For the sake of the presentation below and to simplify the argument it is more convenient to consider, for $L\geq 1$ and $P \in {\mathcal S}^d$, the solution  $v^L=v^L(\cdot; P,\omega)$ of 
\be\label{eq.vL}
v^L + F(D^2 v^L+P,Lx,\omega)=0 \ {\rm in } \ \R^d.
\ee
 \noindent Note that $v_L(x): =L^{2}v^L(L^{-1}x)$ solves the auxiliary problem
$$L^{-2}v_L + F(D^2v_L+P, x,\omega)=0 \ \text{ in } \ \R^d.$$
In view of the stochastic homogenization,  it is known that  the $v^L$'s  converge locally uniformly and a.s.  to the unique solution $\overline v= -\overline F(P)$ of 
$$
\overline  v + \overline F(D^2 \bar v+P)=0 \ \text{ in } \ \R^d.
$$
We assume that there exist nonincreasing rate maps $L \to \lambda(L)$ and  $L\to \Cl[c]{elliptic3}(L)$, which tend to $0$ as $L\to+\infty$, such that 
\be\label{HypRateUnifEll}
\P[ \sup_{x\in B_5} | v^L(x,\cdot)+\overline F(P)|> \lambda(L)] \leq  \Cr{elliptic3}(L).
\ee
Recall that such a rate was obtained in \cite{CS} under a strong mixing assumption on the random media---see at the beginning of the paper for the meaning of this. The recent contribution \cite{AS13} shows that for ``i.i.d'' environments the rate is at least algebraic, that is $\lambda (L)= L^{-\bar a}$ for some $\bar a\in (0,1)$. 
\vskip.05in

\noindent In addition to the above assumption on the convergence rate, it also necessary  to enforce the regularity condition \eqref{Fcontinuous} on $F$. Indeed  we assume that there exists a constant $C>0$ such that
\be\label{StructCondF}
\eqref{Fcontinuous} \ \text{ holds with  } \rho(r)=Cr. 
\ee
\vskip.05in

\noindent The periodic approximations of $F$ is exactly the same as in the previous section and the result is:
\vskip.05in

\begin{thm} Assume \eqref{allF}, \eqref{HypRateUnifEll} and \eqref{StructCondF} and  set  $\eta_L(L)= \lambda(L)^{\frac{d}{2d+1}}$.
Then there exists a constants $C>0$ such that, for $L\geq 1$,  
$$
\P[ |-\overline F(P)+  F_L(P, \cdot)| >C\lambda(L)^{\frac{1}{2d+1}}] \leq  \Cr{elliptic3}(L). 
$$
\end{thm}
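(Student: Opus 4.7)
The proof adapts the template of Theorem \ref{thm:Cvrate} to the elliptic setting: working on the event $\mathcal G := \{\sup_{B_5}|v^L(\cdot,\omega) + \overline F(P)| \leq \lambda(L)\}$ (of probability $\geq 1 - \Cr{elliptic3}(L)$ by \eqref{HypRateUnifEll}), establish the two one-sided bounds $\overline F_L(P,\omega) - \overline F(P) \leq C\lambda(L)^{1/(2d+1)}$ and $\overline F(P) - \overline F_L(P,\omega) \leq C\lambda(L)^{1/(2d+1)}$, the former an analogue of Lemma \ref{Lem:owerBound} and the latter an analogue of Proposition \ref{propupperbound}.

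For the first bound I would construct an explicit $L$-periodic near-subsolution of the $F_L$ corrector equation from the approximate corrector $v^L$. After a sup-convolution regularization $v^{L,\gamma}$ at scale $\gamma$ (needed since by Krylov--Safonov $v^L$ is only H\"older continuous and no Hessian bound is available), set on $Q_L$
\[
\Psi_L(y) := \xi(y/L)\cdot L^2\bigl(v^{L,\gamma}(y/L) + \overline F(P)\bigr),
\]
with $\xi$ a smooth cutoff equal to $1$ on $Q_{1-2\eta_L}$ and supported in $Q_{1-\eta_L}$, extended $L$-periodically. On the interior $Q_{L(1-2\eta_L)} \subset B_{5L}$ where $F_L = F$ and $\xi = 1$, the rate yields $F_L(D^2 \Psi_L + P, y, \omega) \leq \overline F(P) + C\lambda(L)$. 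In the boundary layer the dominant error comes from $D^2(\xi(\cdot/L))\cdot L^2(v^{L,\gamma}(\cdot/L) + \overline F(P))$, bounded by $C\lambda(L)/\eta_L^2$ via the rate and the $(L\eta_L)^{-2}$ bound on $D^2(\xi(\cdot/L))$; the cross and diagonal terms involving $Dv^{L,\gamma}$ and $D^2 v^{L,\gamma}$ are controlled via the sup-convolution parameter $\gamma$. A classical viscosity comparison (\cite{B}) of the resulting $L$-periodic subsolution $\Psi_L$ with the corrector $\chi_L$ on the periodic cell then yields $\overline F_L(P,\omega) - \overline F(P) \leq C\lambda(L)/\eta_L^2$.

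For the second bound I would use the rescaled corrector $v_L(x) := L^{-2}\chi_L(Lx)$, which is $1$-periodic, uniformly bounded and $\alpha$-H\"older continuous by Lemma \ref{lem.pillage}, and solves $F(D^2 v_L + P, Lx, \omega) = \overline F_L$ in $Q_{1-2\eta_L}$. As in the proof of Theorem \ref{thm:cvunifell}, at a point $\bar x$ near the minimum of $v_L$ consider $w(x) := v_L(x) + \sigma|x-\bar x|^2$ and apply the convex-envelope/ABP argument: the contact set $E$ of $w$ with $\Gamma(w)$ has $|E| \geq c\sigma^d$, and provided the ABP volume constraint $c\sigma^d \gtrsim \mathrm{meas}(Q_1 \setminus Q_{1-2\eta_L}) \sim \eta_L$ is met, some $x_1 \in E$ lies in $Q_{1-2\eta_L}$, where ellipticity yields $F(-2\sigma I_d + P, Lx_1, \omega) \geq \overline F_L(P,\omega)$. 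Coupling this pointwise inequality to $v^L$ via doubling variables (the matrix maximum principle of \cite{CIL} as in Lemma \ref{Lem:estimate}) and invoking \eqref{Fcontinuous}--\eqref{StructCondF} to discharge the matrix error terms, one obtains $\overline F(P) - \overline F_L(P,\omega) \leq C(\sigma + \lambda(L))$.

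Combining the two bounds and optimizing, the balance between the $\lambda(L)/\eta_L^2$ error of the first bound, the $\sigma$ error of the second, and the ABP volume constraint $\sigma^d \gtrsim \eta_L$ is achieved by $\eta_L = \lambda(L)^{d/(2d+1)}$ and $\sigma = \lambda(L)^{1/(2d+1)}$, producing the stated rate $C\lambda(L)^{1/(2d+1)}$. The main obstacle is the subsolution construction in the first bound: the lack of Hessian control on $v^L$ (only Krylov--Safonov H\"older regularity) forces the sup-convolution regularization, whose additional scale $\gamma$ must be simultaneously balanced against $\lambda(L)$, $\eta_L$ and $\sigma$, and the matrix error terms arising in the cross derivatives and in the doubling-variables step must be carefully discharged through \eqref{Fcontinuous}--\eqref{StructCondF}.
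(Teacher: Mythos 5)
Your proposal for the bound $\overline F(P) - \overline F_L(P,\omega) \leq C\lambda(L)^{1/(2d+1)}$ is essentially the paper's method: a refined ABP-type touching argument on the rescaled corrector to locate a contact point inside $Q_{1-C\eta_L}$ (where $F_L = F$), followed by doubling variables against $v^L$ with the matrix maximum principle and \eqref{StructCondF} to discharge the error terms. The paper's touching argument is slightly more elaborate than the convex-envelope estimate from Theorem \ref{thm:cvunifell} you invoke (it uses a family of tilted parabolas of opening $a$ and slopes $\xi\in B_{r_0}$, plus a case analysis over $z\in\mathcal Z$ to account for the periodic copies of the minimum, and adds a localizing term $\tfrac b2|y-\hat x_z|^2$ in the doubling functional), but the idea and the resulting error budget $\lambda + C(\eta_L^{1/d} + \lambda\eta_L^{-2})$ are the same, and the final optimization in $\eta_L$ is identical to yours.

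Your proposal for the other direction, $\overline F_L(P,\omega) - \overline F(P) \leq C\lambda(L)^{1/(2d+1)}$, has a genuine gap. You propose to build an $L$-periodic subsolution of the $F_L$ cell problem by cutting off $L^2(v^{L,\gamma}(\cdot/L)+\overline F(P))$ and comparing with the corrector, mimicking Lemma \ref{Lem:owerBound}. But the entire mechanism of Lemma \ref{Lem:owerBound} hinges on the fact that the periodization $H_0$ was \emph{chosen} to satisfy $H_0(p)\leq\overline H(p)$ and $H_0(Dv^\delta+p)\leq\overline H(p)$ (cf.\ \eqref{pptH0}); those inequalities are exactly what makes the test function a subsolution in the region where $H_L$ is no longer $H$. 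In the elliptic setting $F_0$ is an \emph{arbitrary} uniformly elliptic, $x$-independent operator: there is no hypothesis relating $F_0$ to $\overline F$, and one cannot add one without changing the statement of the theorem, since $\overline F$ is what one is trying to approximate. Concretely, in the outer annulus where your cutoff $\xi$ vanishes, $\Psi_L\equiv 0$ and you would need $F_L(P,y,\omega) = (1-\zeta_L(y))F(P,y,\omega) + \zeta_L(y)F_0(P) \leq \overline F(P) + \text{error}$, which has no reason to hold. (There is a secondary problem as well: sup-convolution only bounds $D^2v^{L,\gamma}$ from below, while the error term $D^2(\xi(\cdot/L))\cdot L^2(v^{L,\gamma}(\cdot/L)+\overline F(P))$ changes sign; so even on the transition region the construction needs care.) The paper avoids all of this by proving \emph{both} one-sided bounds with the same ABP-touching-plus-doubling argument applied to the rescaled corrector $w_L$ — touching from below at a minimum for one direction and, ``arguing in a similar way,'' from above at a maximum for the other. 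That unified route is not optional here; it replaces the subcorrector device that is available for viscous HJB but not for the elliptic periodization.
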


\begin{proof} For any $L\geq 1$, let $v^L$  be the solution to \eqref{eq.vL}
and $\chi_L$ a $L-$periodic corrector for $F_L(P,\omega)$, i.e., a solution to 
$$
F_L(D^2\chi_L+P,x, \omega)= \overline F_L(P, \omega) \  {\rm in }  \ \R^d. 
$$
Without loss of generality we assume that $\chi_L(0,\omega)=0$. 
Set $w_L(x,\omega):= L^{-2} \chi_L(Lx,\omega)$ and note that $w_L$ is $1-$periodic and solves 
\be\label{eq.vdeltaL}
F_L(D^2 w_L+P,Lx,\omega)=\overline F_L(P, \omega) \ {\rm in } \ \R^d. 
\ee
It also follows from  \eqref{boundsvL} that 
$\|w_L\| \leq C$ and $\left[ w_L\right]_{{0,\alpha}}\leq C,$
and we note that the $v^L$'s  are   bounded in $C^{0,\alpha}$ uniformly with respect to $L$. 
\vskip.05in

\noindent From now on we fix $\lambda>0$ and $\omega$ such that 
\be\label{condomega}
\sup_{x\in B_5} | v^L(x,\omega)+\overline F(P)|\leq \lambda.
\ee
The goal is to show that 
\be\label{ineqcherchee}
\left|\overline F(P)-  F_L(P, \omega)\right| \; \leq \; \ds  \lambda + C( \eta_L^{\frac1d}  +\lambda \eta_L^{-2}). 
\ee
For this, we follow the  proof of the convergence quantifying each step  in an appropriate way. In what follows to simplify the expressions we suppress the dependence on $\omega$ which is fixed throughout the argument. Moreover since the proof is long we organize it in separate subsections.\\
\vskip.05in

\noindent {\it Construction of the minimum points $\bar x_0$ and $\hat x_0$:} Let $\bar x_0$ be a minimum point of $w_L$ in $\overline Q_1$; note that since $w_L$ is $1-$periodic, $\bar x_0$ is actually a minimum point of $w_L(\cdot)$ in $\R^d$. For $a, r, r_0\in(0,1)$ to be chosen below and $\xi\in \R^d$, we consider the map
$$
\Phi^0_\xi(x):= w^L(x)+\frac{a}{2} |x-\bar x_0|^2 +\lg \xi, x-\bar x_0\rg. 
$$
The  claim is that, if $\hat x_0$ is a minimum point of $\Phi^0_\xi$ with  $\xi\in B_{r_0}$ and
\be\label{defr0bis}
2r_0\leq ar, 
\ee
then $\hat x_0\in B_r(\bar x_0)$.
Indeed, by the definition of $\hat x_0$ and $\bar x_0$, 
$$
\Phi^0_\xi(\hat x_0)\leq \Phi^0_\xi(\bar x_0)= w^L(\bar x_0)\leq w^L(\hat x_0),
$$
hence,
$$
\frac{a}{2} | \hat x-\bar x_0|^2 +\lg \xi, \hat x-\bar x_0\rg \leq 0, 
$$
and, in view of \eqref{defr0bis}, $$|\hat x_0-\bar x_0|\leq 2 |\xi|/a\leq 2r_0/a\leq r.$$

\noindent  Next we consider two cases depending on whether $\hat x_0\in Q_1$ or not.
\vskip.05in

\noindent{\it  Case $1$: $\hat x_0\in Q_1$.} Let 
$$E^0:=\{\hat x_0\in Q_1\cap B_r(\bar x_0): \text{ there exists $\xi\in B_{r_0}$ such that $\Phi^0_\xi$ has a minimum at $\hat x_0$}\}.$$
Note that, by the definition of $\hat x_0$, $w_L$ is touched from below at $\hat x_0$ by a parabola of opening $a\in (0,1)$. It then follows from the Harnack inequality that $w_L$ is touched from above at $\hat x_0$ by a parabola of opening $C$. This is a classical fact about uniformly elliptic second-order equations and we refer to \cite{CS1} for more details. 
It follows that $w_L$ is differentiable at $\hat x_0$ and, in view of  the choice of $\hat x_0$ for $\Phi^0_\xi$, 
$
Dw^L(\hat x_0)+a(\hat x_0-\bar x_0) +\xi=0. 
$
\vskip.05in

\noindent Hence $\xi$ is determined from  $\hat x_0$ by the relation  $\xi= \Psi^0(y):= -\left(Dw^L(\hat x_0)+a(\hat x_0-\bar x_0)\right)$. Moreover, in view  the above remark on the parabolas touching $w_L$ from above and below, $\Psi^0$ is Lipschitz continuous on $E^0$ with a Lipschitz constant bounded by $C$. We refer the reader to \cite{CS1} for the details of this argument.\\ 
\vskip.05in

\noindent {\it Case $2$: $\hat x_0\notin Q_1$:} If $\hat x_0\notin Q_1$, then there exists $z\in \Z^d$ such that $\hat x_0\in Q_1(z)$. Since $\hat x_0\in Q_r(\bar x)$ with $\bar x\in Q_1$ and $r<1$, it follows that  $|z|_\infty=1$. Set $\bar x_z:= \bar x_0-z$, $\hat x_z:=\hat x_0-z\in Q_1$ (note that $\hat x_z\in B_r(\bar x_z)$) and 
$$
\Phi^z_\xi(x)= w^L(x)+\frac{a}{2} |x-\bar x_z|^2 +\lg \xi, x-\bar x_z\rg. 
$$
In view of the periodicity of $w_L$, $\hat x_z$ is a minimum point of $\Phi^z_\xi$. 
\vskip.05in

\noindent  Let ${\mathcal Z}:=\{z\in \Z^d\ ; \ |z|_\infty\leq 1\}$. It is clear that  ${\mathcal Z}$ is a finite set and, if $z\in {\mathcal Z}$, either $|z|_\infty=1$ or $z=0$.  
Also set  $E^z$ to be the set of points $\hat x_z\in Q_1\cap B_r(\bar x_z)$ for which there exists $\xi\in B_{r_0}$ such that $\Phi^z_\xi$ has a minimum at $\hat x_z$. 
\vskip.05in

\noindent Arguing as in the previous case, we see that there is a Lipschitz map $\Psi^z$ on $E^z$ (with a uniform Lipschitz constant bounded) such that, if $\hat x_z\in E^z$ and $\xi=\Psi^z(\hat x_z)$, then $\hat x_z$ is a minimum of $\Phi^z_\xi$. \\
\vskip.05in

\noindent {\it The existence of interior minima.} It follows from the previous two steps that, for any $\xi\in B_{r_0}$, there exist $z\in {\mathcal Z}$ and  $\hat x_z\in E^z$ such that $\xi=\Psi^z(\hat x_z)$. Hence, using that the $\Psi^z$'s  are Lipschitz continuous with a uniform Lipschitz constant uniformly for $z\in {\mathcal  Z}$, we find   
$$
| B_{r_0}| = | \cup_{z\in {\mathcal Z}} \Psi^z(E^z)| \leq \sharp({\mathcal Z}) \max_z |\Psi^z(E^z)| \leq C \max_z |E^z|.
$$
Therefore there must exist some  $z\in {\mathcal Z}$, which we fix from now on, such that 
\be\label{Ezgeq}
 |E^z|\geq r_0^d /C.
 \ee
 We now  show that, for a suitable choice of the constants, the sets $E^z$ and $Q_{1-3\eta_L}$ have a nonempty intersection. For this we note that, since $E^z\subset B_r(\bar x_z)\cap Q_1$, the claim holds true as soon as 
 $$
 \left| E^z\right| + \left| B_r(\bar x_z)\cap Q_{1-3\eta_L}\right| >  \left| B_r(\bar x_z)\cap Q_{1}\right| .
 $$
As
 $$
 \left| B_r(\bar x_z)\cap Q_{1}\right|- \left| B_r(\bar x_z)\cap Q_{1-3\eta_L}\right| = \left| B_r(\bar x_z)\cap (Q_1\backslash Q_{1-3\eta_L})\right|\leq C r^{d-1} \eta_L, 
 $$
 provided $r\geq C\eta_L$, we conclude from \eqref{Ezgeq}, that if 
 \be\label{Condr00}
 r_0^d \geq Cr^{d-1} \eta_L, 
\ee
then  $E^z\cap Q_{1-3\eta_L}\neq\emptyset$.\\
\vskip.05in

\noindent {\it The perturbed problem:} From now on we assume that \eqref{defr0bis} and  \eqref{Condr00} hold and, therefore, there exist (fixed) $z\in {\mathcal Z}$, $\xi\in B_{r_0}$ and $\hat x_z\in Q_{1-3\eta_L}\cap B_r(\bar x_z)$ such that $\Phi^z_\xi$ has a minimum at $\hat x_z$. 
For $b,\sigma\in (0,1)$ to be chosen below, set
$$
\begin{array}{rl}
\ds \Phi_\sigma(x,y) : = & \ds  v^L(x)-w_L(y)-\frac{a}{2}|y-\bar x_z|^2-\frac{b}{2}|y-\hat x_z|^2 -\lg \xi, y-\bar x_z\rg -\frac{|x-y|^2}{2\sigma} \\[2mm]
= & \ds v^L(x)-\Phi^z_\xi(y) -\frac{b}{2}|y-\hat x_z|^2  -\frac{|x-y|^2}{2\sigma}
\end{array}
$$
and let $(\tilde x, \tilde y)$ be a maximum point of $\Phi_\sigma$ over $Q_5\times \R^d$. We claim that 
\be\label{estitilde}
|\tilde y-\hat x_z|\leq 2(\lambda b^{-1})^{\frac12}\qquad {\rm and }\qquad |\tilde y-\tilde x|\leq C\sigma^{\frac{1}{2-\alpha}}.
\ee
Indeed, since $\Phi_\sigma(\tilde x,\tilde y)\geq \Phi_\sigma(\hat x_z,\hat x_z)$ and $\Phi^z_\xi(\hat x_z)\leq \Phi^z_\xi(\tilde y)$, in view of \eqref{condomega},
we find  
$$
\begin{array}{rl}
\ds \Phi_\sigma(\tilde x,\tilde y)\; \geq & \ds  v^L(\hat x_z) -\Phi^z_\xi(\hat x_z)\; \geq \overline F(P)-\lambda -\Phi^z_\xi(\tilde y)\\ [2mm]
\geq & v^L(\tilde x)-2\lambda -\Phi^z_\xi(\tilde y),
\end{array}
$$
and, therefore, 
$$
\frac{b}{2}|\tilde y-\hat x_z|^2 +\frac{|\tilde x-\tilde y|^2}{2\sigma}\leq 2\lambda.
$$
This gives the first inequality in \eqref{estitilde}. The maximality  of $\tilde x$ in $\Phi_\sigma(\cdot, \tilde y)$ and the Hölder regularity of $v^L$ gives the second inequality. 
\vskip.05in
 
\noindent If we assume that 
\be\label{condB}
2(\lambda b^{-1})^{\frac12}\leq \eta_L, 
\ee
then, since $\hat x_z\in Q_{1-3\eta_L}$ and \eqref{estitilde} holds,  it follows that $\tilde y$ belongs to $Q_{1-2\eta_L}$. Moreover, for $\sigma$ small enough, we still have  by \eqref{estitilde} that $\tilde x \in Q_2$. In particular,  $(\tilde x,\tilde y)$ is an interior maximum of $\Phi_\sigma$ in $Q_5 \times \R^d$. \\
\vskip.05in

\noindent {\it The maximum principle:} Since $(\tilde x,\tilde y)$ is an interior maximum of $\Phi_\sigma$, the maximum principle already used earlier states that  there exist $X,Y\in {\mathcal S}^d$ and $p_x,p_y\in \R^d$ such that 
$$
(p_x , X)\in \overline {\mathcal J}^{2,+}v^L(\tilde x), \qquad (p_y, Y)\in   \overline {\mathcal J}^{2,-}w_L(\tilde y)
$$
and 
$$
\left(\begin{array}{cc}
X & 0 \\ 0 & -Y-(a+b)I_d\end{array}\right) \leq \frac{3}{\sigma}\left(\begin{array}{cc}
I_d & -I_d \\ -I_d & I_d\end{array}\right).
$$
In view of \eqref{eq.vL} and \eqref{eq.vdeltaL} and since $\tilde y\in Q_{1-2\eta_L}$, which yields that  $F_L(\cdot,\tilde y)=F(\cdot,\tilde y)$, evaluating the equations satisfied by $v^L$ and $w_L$  at $\tilde x$ and $\tilde y$ respectively we find
\be\label{eq.vLappliii}
v^L(\tilde x) + F(X+P,L\tilde x)\leq 0 
\ee
and 
$$
F(Y+P,L\tilde y)\geq \overline F_L(P, \omega).
$$
From the uniform in $x$ and $\omega$ Lipschitz continuity of $F$ with respect to $P$, we get 
$$
F(Y+(a+b)I_d+P,L\tilde y)\geq \overline F_L(P, \omega)- C(a+b).
$$
Using  \eqref{StructCondF} to estimate  the difference between \eqref{eq.vLappliii} and the above inequality, we obtain
$$
v^L(\tilde x) +  \overline F_L(P, \omega) \leq C( \frac{L^2|\tilde x-\tilde y|^2}{\sigma} + L|\tilde x-\tilde y|+ a+b).
$$
Finally we use \eqref{condomega} and \eqref{estitilde} to conclude that
$$
\ds -\overline F(P)+  \overline F_L(P, \omega) \; \leq \; \ds  \lambda+ C( L^2\sigma^{\alpha/(2-\alpha)}+L\sigma^{1/(2-\alpha)}+ a+b).
$$
Letting  $\sigma\to 0$, we get
\be\label{betterestimate2}
\ds -\overline F(P)+  F_L(P, \omega) \; \leq \; \ds  \lambda+ C\left(  a+b\right)
\ee
provided \eqref{condomega}, \eqref{defr0bis}, \eqref{Condr00} and \eqref{condB} hold. \\
\vskip.05in

\noindent {\it The choice of the constants: } In order for  \eqref{condB}, \eqref{defr0bis} and \eqref{Condr00} to hold, we choose respectively $b = 4\lambda \eta_L^{-2},$ $r=1/2$, $r_0= a/4$ 
and $a= C\eta_L^{\frac1d}$. 
We then have 
$$
\ds -\overline F(P)+  F_L(P, \omega) \; \leq \; \ds  \lambda+ C( \eta_L^{\frac1d}  +\lambda \eta_L^{-2}).
$$
Arguing in a similar way, we can check that, under  \eqref{condomega}, we also have 
$$
\ds -\overline F(P)+  F_L(P, \omega) \; \geq \; \ds  -\lambda- C( \eta_L^{\frac1d}  +\lambda \eta_L^{-2}),
$$
which  yields  \eqref{ineqcherchee}. 
\vskip.05in

\noindent Combining \eqref{ineqcherchee} with the assumption \eqref{HypRateUnifEll} on the convergence rate, we find 
 \begin{multline*}
\ds \P[ |-\overline F(P)+  F_L(P, \cdot)| >\lambda(L)+ C( \eta_L^{\frac1d}  +\lambda(L) \eta_L^{-2})] \\
\qquad \qquad  \leq \; \ds \P[ \sup_{x \in B_5} \left| v^L(x,\cdot)+\overline F(P)\right|> \lambda(L)] \leq \; \ds  \Cr{elliptic3}(L).
 \end{multline*}
The  choice of $\eta_L(L)= (\lambda(L))^{\frac{d}{2d+1}}$ gives the claim.
\end{proof}

\newcommand{\noop}[1]{} \def\cprime{$'$} \def\cprime{$'$}


\begin{thebibliography}{10}

\bibitem{ACS} S.~N. Armstrong, P. Cardaliaguet and P.~E. Souganidis.
\newblock Error estimates and convergence rates for the stochastic homogenization of Hamilton-Jacobi equations. 
\newblock Journal of AMS, to appear.
 \newblock arXiv:1206.2601.
 
 \bibitem{AC} S.~N. Armstrong and P. Cardaliaguet.
\newblock In preparation.

\bibitem{AS12}
S.~N. Armstrong and C.~K. Smart. \newblock  Regularity and stochastic homogenization of fully nonlinear equations without uniform ellipticity. \newblock arXiv:1208.4570.

\bibitem{AS13}
S.~N. Armstrong and C.~K. Smart. \newblock  Quantitative stochastic homogenization of elliptic equations in nondivergence form. \newblock
arXiv:1306.5340.



\bibitem{AS}
S.~N. Armstrong and P.~E. Souganidis.
\newblock Stochastic homogenization of {H}amilton-{J}acobi and degenerate
  {B}ellman equations in unbounded environments.
\newblock {\em J. Math. Pures Appl.}, 97 (2012), 460--504.


\bibitem{ASo3}
S.~N. Armstrong and P.~E. Souganidis.
\newblock Stochastic homogenization of level-set convex {H}amilton-{J}acobi
  equations.
\newblock {\em Int. Math. Res. Not.}, \noop{3001}in press.
\newblock Arxiv:1203.6303.

\bibitem{B} G. Barles. \newblock Solutions de viscosite des equations de Hamilton-Jacobi. Volume 17 of Mathematiques and Appplications (Berlin).
Springer Verlag, Paris, 1994.

\bibitem{BP} A. Bourgeat and A. Piatnitski.
\newblock Approximations of effective coefficients
in stochastic homogenization
\newblock  {\em Ann. I. H. Poincare - Probab. Statist.}, 40 (2004), no. 2,  153--165.

\bibitem{CC}  
L.A. Caffarelli and X. Cabre. \newblock  Fully nonlinear elliptic partial differential equations, \newblock  Amer. Math. Soc., 1997.


\bibitem{CS}  L. A. Caffarelli and P.E. Souganidis. \newblock Rates of convergence for the homogenization of
fully nonlinear uniformly elliptic pde in random media. \newblock \textit{Invent. Math.}, 180 (2010), no. 2,  301--360.

\bibitem{CS1} L. A. Caffarelli and P.E. Souganidis.  \newblock A rate of convergence for monotone finite difference approximations to fully nonlinear, uniformly elliptic PDEs. \newblock Comm. Pure Appl. Math., 61 (2008), no. 1, 1Ð17.


\bibitem{CSW}   L. A. Caffarelli, P.E. Souganidis and L.  Wang \newblock Homogenization of fully nonlinear,
uniformly elliptic and parabolic partial differential equations in stationary ergodic
media, \newblock \textit{Comm. Pure Appl. Math.}, 58 (2005), no. 3,  319--361.



\bibitem{ICD}
I.~Capuzzo Dolcetta and H.~Ishii.
\newblock On the rate of convergence in homogenization of {H}amilton-{J}acobi
  equations.
\newblock {\em Indiana Univ. Math. J.}, 50 (2001),  no. 3, 1113--1129.

\bibitem{CaSo}
P.~Cardaliaguet and P.E.~Souganidis.  \newblock  Homogenization and enhancement of the G-equation in random environments. \newblock {\em Comm. Pure and Appl. Math.}, in press.
\newblock


\bibitem{CIL}
M.~G. Crandall, H.~Ishii, and P.-L. Lions.
\newblock User's guide to viscosity solutions of second order partial
  differential equations.
\newblock {\em Bull. Amer. Math. Soc. (N.S.)}, 27 (1992), no. 1, 1--67.

\bibitem{DD} G.~Dal Maso and L.~Modica. \newblock Nonlinear stochastic homogenization. {\it Ann. Mat. Pura
Appl.}, 4 (1986), 347-389.

\bibitem{DD1} G.~Dal Maso and L.~Modica. \newblock Nonlinear stochastic homogenization and ergodic theory.
{\it J. Reine Angew. Math.}, 368 (1986),  28-42.

\bibitem{GZ} X. Guo and O. Zeitouni. \newblock 
    Quenched invariance principle for random walks in balanced random environment. \newblock arXiv:1003.3494.
%

%



\bibitem{Ko}  S.~M.~Kozlov. \newblock The averaging method and walks in inhomogeneous environments.
{\it Uspekhi Mat. Nauk},  40 (1985), no. 2, 61--120.


\bibitem{KRV06}
E.~Kosygina, F.~Rezakhanlou, and S.~R.~S. Varadhan. 
\newblock Stochastic homogenization of {H}amilton-{J}acobi-{B}ellman equations.
\newblock {\em Comm. Pure Appl. Math.}, 59 (2006), no. 10, 1489--1521.


\bibitem{KV}  E.~Kosygina and S. R. S. Varadhan. \newblock
Homogenization of Hamilton-Jacobi-Bellman equations with
respect to time-space shifts in a stationary ergodic medium.
{\it Comm. Pure Appl. Math.},  61 (2008), no. 6, 816-847.


\bibitem{L} G.~Lawler. \newblock Weak convergence of random walk in random environments. \newblock {\em Comm. in Mathematical Phys.}  87 (1982),  81--87.

\bibitem{Lin} J.~Lin. \newblock On the Stochastic homogenization of fully nonlinear uniformly parabolic equations in stationary ergodic 
spatio-temporal media. \newblock arXiv:1307.4743. 



\bibitem{LS2005}
P.-L. Lions and P.~E. Souganidis.
\newblock Homogenization of degenerate second-order PDE in periodic and almost periodic environments and applications.
\newblock {\em Annales de l'Institut Henri Poincare (C) Non Linear Analysis.}, 22 (2005), 667-677.

\bibitem{LS1}
P.-L. Lions and P.~E. Souganidis.
\newblock Correctors for the homogenization of {H}amilton-{J}acobi equations in
  the stationary ergodic setting.
\newblock {\em Comm. Pure Appl. Math.}, 56 (2003), no. 10, 501--1524.

\bibitem{LS2}
P.-L. Lions and P.~E. Souganidis.
\newblock Homogenization of ``viscous'' {H}amilton-{J}acobi equations in
  stationary ergodic media.
\newblock {\em Comm. Partial Differential Equations},  30 (2005), no. 1-3, 335--375.

\bibitem{LS3}
P.-L. Lions and P.~E. Souganidis.
\newblock Stochastic homogenization of {H}amilton-{J}acobi and
  ``viscous''-{H}amilton-{J}acobi equations with convex
  nonlinearities---revisited.
\newblock {\em Commun. Math. Sci.},  8 (2010), no. 2, 627--637.




\bibitem{MN}
I.~Matic and J.~Nolen.
\newblock A sublinear variance bound for solutions of a random
  {H}amilton-{J}acobi equation.
\newblock {\em Journal of Statistical Physics},149 (2012), no 2,  342-361.

\bibitem{NolenNovikov} J.~ Nolen and A.~Novikov.
Homogenization of the G-equation with incompressible random drift.
{\it Commun. Math. Sci.}, 9 (2011), no. 2, 561-582.



\bibitem{Ow} H.~Owhadi. \newblock Approximation of effective conductivity of ergodic media by periodization.
\newblock {\em Probab. Theory Related Fields} 125 (2003), 225--258.

\bibitem{PV1} G.~Papanicolaou and S. R. S.~Varadhan. \newblock Boundary value problems with rapidly oscillating
random coefficients. {\it In Random Fields, Vol. I, II (Esztergom, 1979), volume 27
of Colloq. Math. Soc. Janos Bolyai, 835 -873.} North-Holland, Amsterdam, 1981.

\bibitem{PV2} G.~Papanicolaou and S. R. S.~Varadhan. \newblock Diffusions with random coefficients.
{\it In Statistics and probability: essays in honor of C. R. Rao, 547--552.} North-
Holland, Amsterdam, 1982.


\bibitem{RT}
F.~Rezakhanlou and J.~E. Tarver.
\newblock Homogenization for stochastic {H}amilton-{J}acobi equations.
\newblock {\em Arch. Ration. Mech. Anal.}, 151 (2000), no. 4, 277--309.

\bibitem{S91}
P.~E. Souganidis.
\newblock Stochastic homogenization of {H}amilton-{J}acobi equations and some
  applications.
\newblock {\em Asymptot. Anal.}, 20 (1999), 1--11.

\bibitem{Sch}  R.~Schwab. \newblock Stochastic homogenization of Hamilton-Jacobi equations in stationary ergodic spatio-temporal media.
{\it Indiana Univ. Math. J.} {\bf 58} (2009), no. 2, 537--581.
 

%
%
%

\bibitem{ZKO}  V. V.  Zhikov, S. M. Kozlov and Ole\u{\i}nik, O.   Averaging of parabolic operators.
{\it Trudy Moskov. Mat. Obshch.} 45 (1982), 182--236.
\end{thebibliography}
\end{document}